\newcommand{\rA}{{\mathrm{A}}}
\newcommand{\rC}{{\mathrm{C}}}
\newcommand{\rE}{{\mathrm{E}}}
\newcommand{\rF}{{\mathrm{F}}}
\newcommand{\rG}{{\mathrm{G}}}
\newcommand{\calR}{{\mathcal{R}}}
\newcommand{\Vmin}{\mathbf{V}}
\newcommand{\Res}{{\mathrm{Res}}}
\newcommand{\Sp}{{\mathrm{Sp}}}
\newcommand{\Sym}{{\mathrm{Sym}}}
\newcommand{\calH}{{\cal{H}}}
\newcommand{\bbC}{{\mathbb C}}
\newcommand{\bbR}{{\mathbb{R}}}
\newcommand{\bbZ}{{\mathbb Z}}
\newcommand{\rO}{{\mathrm{O}}}
\newcommand{\rU}{{\mathrm{U}}}
\newcommand{\Spin}{{\mathrm{Spin}}}
\newcommand{\SL}{{\mathrm{SL}}}
\newcommand{\GL}{{\mathrm{GL}}}
\newcommand{\SU}{{\mathrm{SU}}}
\newcommand{\SO}{{\mathrm{SO}}}
\newcommand{\fraka}{{\mathfrak{a}}}
\newcommand{\frakg}{{\mathfrak{g}}}
\newcommand{\frakm}{{\mathfrak{m}}}
\newcommand{\frakn}{{\mathfrak{n}}}
\newcommand{\frakk}{{\mathfrak{k}}}
\newcommand{\frakp}{{\mathfrak{p}}}
\newcommand{\frakq}{{\mathfrak{q}}}
\newcommand{\frakt}{{\mathfrak{t}}}
\newcommand{\bGamma}{\overline{{\mathcal{R}}}}
\newcommand{\bfA}{\mathbf{A}}
\newcommand{\bfC}{\mathbf{C}}
\newcommand{\bfD}{\mathbf{D}}
\newcommand{\bfE}{\mathbf{E}}
\newcommand{\bfF}{\mathbf{F}}
\newcommand{\bfG}{\mathbf{G}}
\newcommand{\rSp}{\mathrm{Sp}}
\newcommand{\wtGL}{\widetilde{\mathrm{GL}}}
\newcommand{\Hom}{{\mathrm{Hom}}}
\newtheorem{lemma}{Lemma}[section] 
\newtheorem{prop}[lemma]{Proposition}
\newtheorem{thm}[lemma]{Theorem}
\newtheorem{cor}[lemma]{Corollary}
\newtheorem*{claim*}{Claim}
\begin{document}

\subjclass{22E46, 22E47}

\title[exceptional theta correspondences]{Duality for spherical representations in exceptional theta correspondences}

\author{Hung Yean Loke and Gordan Savin} 

\address{Hung Yean Loke, Department of Mathematics, National
  University of Singapore, 21 Lower Kent Ridge Road, Singapore 119077}
\email{matlhy@nus.edu.sg}

\address{Gordan Savin, Department of Mathematics, University of Utah,
  Salt Lake City, UT 84112} \email{savin@math.utah.edu}

\begin{abstract} 
We study the exceptional theta correspondence for real groups 
obtained by restricting the minimal representation of the split
exceptional group of the type $\mathbf E_n$, to a split dual
pair where one member is the exceptional group of the type
$\bfG_2$. We prove that the correspondence gives a bijection between
spherical representations if $n=6,7$, and a slightly weaker statement
if $n=8$.
\end{abstract}

\maketitle

\normalem

\section{Introduction}

Let $H$ be the group of real points of a split, simply connected
algebraic group of the type $\bfE_n$, and let $K_H$ be a maximal
compact subgroup of $H$.  The group $H$ contains a split dual pair
$G\times G'$ where one group is of the type $\bfG_2$, while the other
is a simply connected group of the type $\bfA_2$, $\bfC_3$ and
$\bfF_4$, respectively, see \cite{LS15}.  We shall not fix $G$ or $G'$
to be of the type $\bfG_2$ in advance. The actual choice will depend
on the nature of arguments.  

Let $\frakg$ and $\frakg'$ be the Lie
algebras of $G$ and $G'$, respectively, and let $K$ and $K'$ be the
maximal compact subgroups of $G$ and $G'$, respectively.
Let $\Vmin$ be the Harish-Chandra module of the minimal representation
of $H$.  Let $V$ be an irreducible $(\frakg, K)$-module and $V'$
be an irreducible $(\frakg', K')$-module. We say that $V$ and
$V'$ correspond if $V\otimes V'$ is a quotient of $\Vmin$. It is known
that the infinitesimal character of $V$ determines the infinitesimal
character of $V'$ by \cite{HPS} and \cite{Li}.  We call this a  \emph{weak
duality}.

Let $V$ be an irreducible $(\frakg, K)$-module. Following
\cite{Howe}, there is a $(\frakg',K')$-module $\Theta(V)$ such that
\[
\Vmin/\cap_\phi \ker \phi \simeq \Theta(V) \otimes V 
\]
where the intersection is taken over all $(\frakg,K)$-module
homomorphisms $\phi \colon \Vmin \rightarrow V$.  The aim in the
theory of correspondences is to prove that $\Theta(V)$ is a finite
length $(\frakg',K')$-module with a unique irreducible quotient $V'$,
and then conversely, that $\Theta(V')$ is a finite length $(\frakg,
K)$-module with $V$ as a unique irreducible quotient.  We call this a
\emph{strong duality}.  The goal of this paper is to establish the
strong duality for spherical representations.  The minimal
representation $\Vmin$ is itself spherical, and let $v_0$ be a
non-zero spherical vector in $\Vmin$.  The strong duality for
spherical representations follows from the identities
\[ 
U(\frakg) \cdot v_0=\Vmin^{K'} \text{ and } U(\frakg') \cdot v_0= \Vmin^{K} 
\] 
where $U(\frakg)$ and $U(\frakg')$ are the enveloping
algebras.  In this paper we prove both of these identities for the
dual pairs in $\bfE_6$ and $\bfE_7$, and thus we establish the
strong duality for spherical representations in these two cases, but
only one for the dual pair in $\bfE_8$, see Corollaries \ref{CorTheta1} and \ref{CorTheta2}. 
Note that only one identity is sufficient to establish a multiplicity one statement, more
precisely, if $V$ and $V'$ are spherical, then $V\otimes V'$ can
appear as a quotient of $\Vmin$ with multiplicity at most one.

\smallskip 
We now briefly sketch our arguments. Assume that $G$ is the smaller
member of the dual pair.  Let $\mathfrak k \oplus \frakp$ be the
Cartan decomposition of $\frakg$. Let $S(\frakp)$ be the symmetric
algebra generated by the vector space $\frakp$.  We have an
isomorphism $U(\frakg) \cong S(\frakp) \otimes U(\mathfrak k)$ of
vector spaces.  Since $G$ is smaller, there exists $0<p<2$ such that
the restriction to $G$ of every matrix coefficient of $\Vmin$ is
contained in $L^{p}(G)$.  This allows us to prove that a generic
spherical representation $V$ of $G$ is a quotient of $\Vmin$ by
integrating the matrix coefficients of $V$ against the matrix
coefficients of $\Vmin$.  As a consequence, the map $X\mapsto X\cdot
v_0$ is an injection of $S(\frakp)$ into $\Vmin^{K'}$.  In fact, this
is true if we replace $\Vmin$ \ by any non-trivial, spherical, unitary
representation of $H$, since the matrix coefficients of any such
representation decay faster than the matrix coefficients of $\Vmin$,
as proved in \cite{LS}. The minimality of $\Vmin$ will assure that
this map is a bijection.  More precisely, let $\mathfrak k_H\oplus
\mathfrak p_H$ be the Cartan decomposition of $\mathfrak h$, and let
$\omega$ be the highest weight of the adjoint action of $\frakk_H$ on
$\frakp_H$.  Then the $K_H$-types of $\Vmin$ are, see \cite{BK},
\[
\Vmin=\bigoplus_{n=0}^{\infty}V(n\omega). 
\]
We prove that the dimension of $S^n(\frakp)$, the $n$-the symmetric
power of $\frakp$, is equal to the dimension of $V(n\omega)^{K'}$, the
space of $K'$-fixed vectors in $V(n\omega)$. It follows at once that
the map $X\mapsto X\cdot v_0$ is a bijection of $S(\frakp)$ and
$\Vmin^{K'}$. In particular, $U(\frakg) \cdot v_0=
\Vmin^{K'}$. Working from the other side, so $G'$ is the larger member
of the dual pair, we can show that every spherical representation $V'$
of $G'$, that weakly corresponds to a spherical representation $V$ of
$G$, is a quotient $U(\frakg') \cdot v_0$. This again reduces the
problem of verifying $U(\frakg') \cdot v_0=\Vmin^{K'}$ to a branching
problem, i.e.  determining the dimension of $V(n\omega)^{K'}$.
 
 \smallskip 
 
 The last two sections are devoted to the branching problems. We now
 assume that $G'$ is of the type $\bfG_2$. It turns out that computing
 the dimension of $V(n\omega)^K$ is surprisingly easy and follows from
 some, more or less known, branching rules.  On the other hand,
 computing the dimension of $V(n\omega)^{K'}$ is progressively more
 difficult, since $K'$ is the same in all three cases. We do not know
 how to compute the dimension of $V(n\omega)^{K'}$ if $H$ is of the
 type $\bfE_8$. In the other two cases the branching is derived from a
 branching rule for certain quaternionic representations
 \cite{LokeJFA}, using a see-saw dual pair argument.
 
 \smallskip 
 We need to add a small caveat. If $H$ is of the type $\mathbf E_6$
 then the strong duality holds only in the context of slightly larger
 groups. More precisely, let $\tilde H$ be the semi-direct product of
 $H$ and its group of outer automorphisms $\mathbb Z/2\mathbb Z$.  If
 $G\times G' $ is the dual pair in $H$ such that $G\cong \SL_3(\mathbb
 R)$, then the centralizer of $G'$ in $\tilde H$ is $\tilde G\cong
 \SL_3(\mathbb R)\rtimes \mathbb Z/2\mathbb Z$. We prove that the
 strong duality holds between spherical $(\mathfrak g, \tilde
 K)$-modules and spherical $(\mathfrak g', K')$-modules. A similar
 situation holds for classical dual pairs $\Sp_{2n}(\mathbb R) \times
 \rO(p,q)$, where the strong duality fails if $\rO(p,q)$ is replaced
 by $\SO(p,q)$.

\section{Geometry of exceptional groups} \label{G}

\subsection{}
Let $\mathfrak h$ be a split simple real Lie algebra of the type
$\bfE_n$.  Fix a maximal split Cartan subalgebra $\fraka_H \subset
\mathfrak h$.  Let $\Phi(\mathfrak h, \fraka_H)$ be the root system
arising from the adjoint action of $\mathfrak a_H$ on $\mathfrak h$.
Let $H$ be the group of real points of the simply connected Chevalley
group attached to $\mathfrak h$.  Let $K_H$ be the maximal compact
subgroup of $H$ given as the group of fixed points of a Chevalley
involution of $G$, i.e. one that acts as $-1$ on $\fraka_H$.  Fix a set
of positive roots $\Phi^+(\mathfrak h, \fraka_H)$.  Then any element
in $\fraka_H$ can be uniquely written as a sum $\sum_i t_i
\omega_i^{\vee}$ where $t_i\in\mathbb R$ and $\omega_1^{\vee}, \ldots
, \omega_n^{\vee}\in \fraka_H$ are the fundamental co-weights.  The
dominant cone $\fraka_H^+\subset \fraka_H$ is the set of  all $\sum_i
t_i \omega_i^{\vee}$ where $t_i\geq 0$.  Let $A_H=\exp(\fraka_H)
\subset H$. Then $H$ has a Cartan decomposition
 \[ 
 H= K A_H^+ K 
 \] 
 where $A_H^+=\exp(\fraka_H^+)$.

\subsection{} \label{sec:dualpairs}
We shall now construct the dual pair $G\times G' \subset H$ in two
ways. In the first construction $G'$ will be of the type~$\bfG_2$.  In
the second construction $G$ will of the type~$\bfG_2$. In each of
the two constructions $G$ will come equipped with a Cartan
decomposition $G=KA^+K$ such that $K \subset K_H$ and $A^+ \subset
A^+_H$.  These inclusions will be a tool to understand integrality
properties of matrix coefficients of $H$ when restricted to $G$.

Let $\fraka \subset \fraka_H$ be the subalgebra spanned by
the fundamental co-characters $\omega_i^{\vee}$ corresponding to the
black points in the following marked Dynkin diagrams:

\begin{picture}(300,130)(-60,00)

\put(10,82){\line(0,1){16}}
\put(12,100){\line(1,0){16}}
\put(32,100){\line(1,0){16}}
\put(-08,100){\line(1,0){16}}
\put(-28,100){\line(1,0){16}}

\put(-30,100){\circle*{4}}
\put(-10,100){\circle{4}}
\put(10,100){\circle{4}}
\put(10,80){\circle{4}}
\put(30,100){\circle{4}}
\put(50,100){\circle*{4}}

\put(115,82){\line(0,1){16}}
\put(117,100){\line(1,0){16}}
\put(137,100){\line(1,0){16}}
\put(157,100){\line(1,0){16}}
\put(97,100){\line(1,0){16}}
\put(77,100){\line(1,0){16}}

\put(75,100){\circle*{4}}
\put(95,100){\circle{4}}
\put(115,100){\circle{4}}
\put(115,80){\circle{4}}
\put(135,100){\circle{4}}
\put(155,100){\circle*{4}}
\put(175,100){\circle*{4}}

\put(240,82){\line(0,1){16}}
\put(202,100){\line(1,0){16}}
\put(222,100){\line(1,0){16}}
\put(242,100){\line(1,0){16}}
\put(262,100){\line(1,0){16}}
\put(282,100){\line(1,0){16}}
\put(302,100){\line(1,0){16}}

\put(200,100){\circle*{4}}
\put(220,100){\circle{4}}
\put(240,100){\circle{4}}
\put(240,80){\circle{4}}
\put(260,100){\circle{4}}
\put(280,100){\circle*{4}}
\put(300,100){\circle*{4}}
\put(320,100){\circle*{4}}


\put(10,32){\line(0,1){16}}
\put(12,50){\line(1,0){16}}
\put(32,50){\line(1,0){16}}
\put(-08,50){\line(1,0){16}}
\put(-28,50){\line(1,0){16}}

\put(-30,50){\circle{4}}
\put(-10,50){\circle{4}}
\put(10,50){\circle*{4}}
\put(10,30){\circle*{4}}
\put(30,50){\circle{4}}
\put(50,50){\circle{4}}

\put(115,32){\line(0,1){16}}
\put(117,50){\line(1,0){16}}
\put(137,50){\line(1,0){16}}
\put(157,50){\line(1,0){16}}
\put(97,50){\line(1,0){16}}
\put(77,50){\line(1,0){16}}

\put(75,50){\circle*{4}}
\put(95,50){\circle*{4}}
\put(115,50){\circle{4}}
\put(115,30){\circle{4}}
\put(135,50){\circle{4}}
\put(155,50){\circle{4}}
\put(175,50){\circle{4}}

\put(240,32){\line(0,1){16}}
\put(202,50){\line(1,0){16}}
\put(222,50){\line(1,0){16}}
\put(242,50){\line(1,0){16}}
\put(262,50){\line(1,0){16}}
\put(282,50){\line(1,0){16}}
\put(302,50){\line(1,0){16}}

\put(200,50){\circle{4}}
\put(220,50){\circle{4}}
\put(240,50){\circle{4}}
\put(240,30){\circle{4}}
\put(260,50){\circle{4}}
\put(280,50){\circle{4}}
\put(300,50){\circle*{4}}
\put(320,50){\circle*{4}}
\end{picture}

Consider the first row of the marked diagrams. By inspection, one
checks that the adjoint action of $\fraka$ on the Lie algebra of
$\mathfrak h$ gives rise to a restricted root system $\Phi(\mathfrak
h, \fraka)$ of the type $\bfA_2$, $\bfC_3$ and $\mathbf F_4$,
respectively. The long root spaces are one-dimensional while the short
root spaces are 8-dimensional. (If the restricted root system is of
the type $\bfA_2$ then all roots are considered short.)  Let $C$ be
the derived group of the centralizer of $\fraka$ in $H$. Its Dynkin
diagram is obtained by removing the black vertices. Thus $C$ is a
simply connected Chevalley group of the type $\bfD_4$ in each of the
three cases.  The Chevalley involution of $H$ restricts to a Chevalley
involution of $C$.  The automorphism of order~3 of the root system
$\bfD_4$ can be lifted to an automorphism of $C$ commuting with the
Chevalley involution.  Let $G'$ be the group of fixed points in $C$ of
that automorphism. It is a Chevalley group of the type $\bfG_2$.  Let
$G$ be the centralizer of $G'$ in $H$. It is well known that $G$ is
split, simply connected, of the type $\bfA_2$, $\bfC_3$ and $\bfF_4$,
respectively. In fact, this is easy to see on the level of Lie
algebras, since the group $G'$ fixes a line in each short root space
for the restricted root system.  The maximal split torus in $\frakg$
is $\mathfrak a$. The restriction to $G$ of the Chevalley involution
of $H$ is a Chevalley involution of $G$ since it acts as $-1$ on
$\fraka$. Hence the set of fixed points in $G$ is a maximal compact
subgroup $K$ such that $K \subset K_H$.  The root system
$\Phi(\frakg,\fraka)$ is equal to the restricted root system
$\Phi(\mathfrak h,\fraka)$, and the choice of positive roots
$\Phi^+(\mathfrak h,\fraka)$ determines a choice of positive roots
$\Phi^+(\frakg,\fraka)$. In particular, all $\omega_i^{\vee}$'s
spanning $\fraka$ belong to the dominant cone $\fraka^+$ and it is
easy to check that they are in fact precisely the fundamental
co-weights for $\Phi^+(\frakg,\mathfrak a)$.  Hence $A^+\subset
A_H^+$.

\smallskip

Now consider the second row of the marked diagrams. In this case the
rank of $\fraka$ is two. By inspection, one checks that the adjoint
action of $\fraka$ on the Lie algebra of $\mathfrak h$ gives rise to a
restricted root system $\Phi(\mathfrak h, \fraka)$ of the type
$\bfG_2$ in each case. The long root spaces are one-dimensional while
the short root spaces are 9, 15 and 27-dimensional, respectively.  Let
$C$ be the derived group of the centralizer of $\fraka$ in $H$. Its
Dynkin diagram is obtained by removing the black vertices.  In each
case $C$ has an outer automorphism of order 2.  (In the case where $C$
is of the type $\bfA_2\times \bfA_2$ we pick the automorphism that, on
the level of the Dynkin diagrams, is a composite of the automorphism
of the $\bfE_6$ diagram followed by the automorphism of one of the two
$\bfA_2$ factors.)  Again, we pick the automorphism so that it
commutes with the Chevalley involution.  Let $G'$ be the group of
fixed points of the automorphism, and $G$ its centralizer in $H$.
Since $G'$ fixes a line in each short space it follows that $G$ is of
the type $\bfG_2$.  Arguing exactly as above we have $K \subset K_H$,
where $K$ is a maximal compact subgroup of $G$, and $A^+\subset
A^+_H$.

\section{Minimal representations}  

\subsection{}
We retain the setting of the previous section.  Recall that we
have fixed a Cartan decomposition $H=K_HA_H^+K_H$ where
$A_H^+=\exp(\mathfrak a_H^+)$ and $\mathfrak a_H^+$ is a cone whose
elements are sums $\sum_i t_i \omega_i^{\vee}$, where $t_i\geq 0$,
over all fundamental co-weights.  Let $||\cdot ||$ be an euclidean
norm on $\mathfrak a_H$. For any $\lambda \in \mathfrak a^*_H$, let
$a^{\lambda}$ be the character of $A_H$ defined by $a^{\lambda}=
\exp(\lambda(\log a))$, for all $a\in A_H$.  Let $\omega_i \in
\mathfrak a^*_H$ be the fundamental characters, and $\rho_H\in
\mathfrak a^*_H$ be the half sum of all positive roots of $H$.  Let
$\omega_b$ be the fundamental weight corresponding to the branching
vertex.  Let $\Vmin$ be the Harish-Chandra module of the minimal
representation of $H$.  The normalized leading exponent of $\Vmin$ is
$\rho_H -\omega_b$.  For our purposes, this means that the restriction
of any matrix coefficient of $\Vmin$ to $A_H^+$ is bounded by a
multiple of $(1 + ||\log a ||)^d a^{-\omega_b}$, for some integer $d$.
Let $p_H$ be the smallest positive real number such that
 \[ 
p_H \langle\omega_b , \omega_i^{\vee} \rangle \geq 2 \langle\rho_H ,
\omega_i^{\vee}\rangle
\] 
for all fundamental co-characters $\omega_i^{\vee}$.  Then
$a^{-\omega_b} \leq a^{-2\rho_H/p_H}$ for all $a\in A_H^+$ and, by
\cite[Theorem 8.48]{Knapp}, the matrix coefficients of $\Vmin$ are
contained in $L^{p_H+\epsilon}(H)$, for all $\epsilon >0$.  One easily
checks (see \cite{LS}) that $p_H$ is as in the following table:
\[
\begin{array}{c||c|c|c} 
H & \bfE_6 & \bfE_7 & \bfE_8 \\ \hline 
p_H &  8 & 9 & 29/3 \\ 
\end{array} 
\] 

\subsection{}
We now study integrability properties of the matrix coefficients of
$\Vmin$ restricted to~$G$.  We have a Cartan decomposition $G=K A^+
K$, where $A^+=\exp(\mathfrak a^+)$, where $\mathfrak a^+$ is a
subcone of $\mathfrak a_H^+$ whose elements are sums $\sum_i t_i
\omega_i^{\vee}$, where $t_i\geq 0$, over the fundamental co-weights
corresponding to the black vertices in the marked Dynkin
  diagrams in \Cref{sec:dualpairs}. Let $\rho\in \mathfrak a^*$ be
the half-sum of all positive roots of $G$.  Let $p$ be the smallest
positive real number such that
 \[ 
p \langle\omega_b , \omega_i^{\vee} \rangle \geq 2 \langle \rho,
\omega_i^{\vee}\rangle
\] 
for considering only $\omega_i^{\vee}$ of $G$, i.e.  those
corresponding to the black vertices in the marked Dynkin
diagrams. Then $a^{-\omega_b} \leq a^{-2\rho /p}$ for all $a\in A^+$
and the restriction to $G$ of a matrix coefficient of $\Vmin$ is
contained in $L^{p+\epsilon}(G)$, for all $\epsilon >0$.  For $G$
arising form the first family of marked Dynkin diagrams, i.e.  the
centralizer of $G$ is of the type $\bfG_2$, we have the following
values of $p$:
\[
\begin{array}{c||c|c|c} 
G & \bfA_2 & \bfC_3 & \bfF_4 \\ \hline 
p &  1 & 2 & 8/3 \\ 
\end{array}
\] 
For $G$ arising from the second family of marked Dynkin diagrams we
have the following values of $p$:
\[
\begin{array}{c||c|c|c} 
G & \bfG_2 & \bfG_2 & \bfG_2 \\ \hline 
p &  2 & 3/2 & 1 \\ 
\end{array}
\]

We record what we have found in a form that is useful for later. 

\begin{lemma} \label{L:integrability} 
Assume that $G \times G'$ is the dual pair in $H$ with $G$ smaller
than $G'$. Then there exists a positive real number $p<2$ such that
the restriction to $G$ of matrix coefficients of the minimal
representation of $H$ are contained in $L^{p+\epsilon}(G)$, for all
$\epsilon >0$. \qed
\end{lemma}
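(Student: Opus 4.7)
The plan is to verify the lemma by a direct inspection of the case-by-case data already compiled in Section~3.2, so the proof is essentially a summary rather than new work. First I would identify, for each $n \in \{6,7,8\}$, which entry of the two tables applies when $G$ is the smaller member of the dual pair inside $H = \bfE_n$.

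When $n=6$, the dual pair is of the type $\bfA_2 \times \bfG_2$, and the smaller member is $\bfA_2$; this falls under the first family of marked Dynkin diagrams (with $G = \bfA_2$), giving $p = 1$. When $n = 7$, the dual pair is $\bfG_2 \times \bfC_3$, and $\bfG_2$ is smaller; this falls under the second family (with $G = \bfG_2$), giving $p = 3/2$. When $n = 8$, the dual pair is $\bfG_2 \times \bfF_4$ with $\bfG_2$ smaller; again the second family applies, giving $p = 1$. In all three cases $p < 2$, so the conclusion is immediate once the tables are invoked.

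The integrability itself is supplied by the preceding discussion: since $A^+ \subset A_H^+$ and the normalized leading exponent $\rho_H - \omega_b$ of $\Vmin$ controls its matrix coefficients on $A_H^+$, the defining inequality $p\langle \omega_b, \omega_i^{\vee}\rangle \geq 2\langle \rho, \omega_i^{\vee}\rangle$ across the fundamental coweights of $G$ forces $a^{-\omega_b} \leq a^{-2\rho/p}$ for all $a \in A^+$, and Knapp's Theorem~8.48 then yields that the restriction lies in $L^{p+\epsilon}(G)$ for every $\epsilon > 0$.

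There is no real conceptual obstacle here; the content of the lemma is precisely that, in each of the three relevant cases, the smaller member of the dual pair is ``large enough'' inside $H$ (i.e.\ its $\rho$ is small enough relative to the weight $\omega_b$ controlling the leading exponent of $\Vmin$) to force $p<2$. The only thing that could in principle go wrong is a bad numerical coincidence giving $p \geq 2$ in some case, but inspection of the tables rules this out. Accordingly, the whole proof reduces to the remark that the three relevant values $1$, $3/2$, and $1$ are all strictly less than $2$.
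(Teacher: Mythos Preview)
Your proposal is correct and mirrors exactly what the paper does: the lemma is stated with a \qed\ immediately after the two tables, and the paper's ``proof'' consists solely of the preceding case-by-case computation you summarize. Your identification of the smaller member in each case ($\bfA_2$ for $\bfE_6$, $\bfG_2$ for $\bfE_7$ and $\bfE_8$) and the resulting values $p=1,\,3/2,\,1$ is exactly the content the paper is recording.
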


\subsection{} \label{sec:ppleseries}
Let $P=MAN$ the minimal parabolic subgroup of $G$ containing $A$ such
that the Lie algebra of the unipotent radical $N$ is spanned by the
positive root spaces.  Every $\lambda \in\mathfrak a_{\bbC}^*$
defines a character $a^\lambda = \lambda(\log(a))$ of $A$.  We can
extend this character to $P$, so that it is trivial on $MN$. The
normalized induction from $P$ to $G$ produces a smooth representation
$\pi_{\lambda}$ of $G$, naturally isomorphic to the space of smooth
functions on $M\backslash K$.  We have a $G$-invariant pairing
$(\cdot, \cdot )$ between $\pi_{-\lambda}$ and $\pi_{\lambda}$ given
by integrating functions over $K$.  Let $V_{\lambda}$ be the
Harish-Chandra module of $\pi_{\lambda}$.  Let $v_{\lambda}\in
V_{\lambda}$ be the unique $K$-invariant vector such that its value on
$M\backslash K$ is one.  Let
\begin{equation} \label{eqspherical}
\varphi_{\lambda}(g)=( \pi_{\lambda} (g)v_{\lambda},v_{-\lambda})
\end{equation}
 be the spherical function on $G$ corresponding to $\pi_{\lambda}$
 where, abusing the notation, $\pi_{\lambda}(g)$ denotes the group
 action.  It is well known that, for $\lambda\in i\mathfrak a^*$, the
 matrix coefficients of~$V_{\lambda}$, in particular
 $\varphi_{\lambda}$, are contained in $L^{2+\epsilon}(G)$, for all
 $\epsilon >0$.  Moreover if we fix a $q>2$, then \cite[Theorem
   8.48]{Knapp} implies that there exists a tubular
 neighborhood of $i\mathfrak a^*$ in $\mathfrak a^*_{\bbC}$, defined
 by the inequality $||\Re(\lambda )|| < \delta$ for some $\delta
 >0$ depending on $q$, such that, for $\lambda$ contained in the
 tubular neighborhood, the matrix coefficients of $V_{\lambda}$ are
 contained in $L^{q}(G)$.

\medskip 

Let $\Vmin^\vee$ be the contragradient of the minimal representation
$\Vmin$.  Let $(\cdot, \cdot )$ denote the natural pairing between
$\Vmin$ and $\Vmin^\vee$.  We shall fix, once for all a pair of
spherical vectors $v_0\in \Vmin$ and $\tilde v_0\in \Vmin^\vee$ of
spherical vectors such that $(v_0, \tilde v_0)=1$.  Let $\Phi(g)=
(\pi(g)v_0, \tilde v_0)$, $g\in H$, be the spherical function. Here
$\pi(g)$ denotes the group action of $H$ in a globalization of
$\Vmin$.

\begin{prop} \label{P:convergence} 
Assume that $G$ is the smaller member of the dual pair.  Then there is
a tubular neighborhood of $i\mathfrak a^*$ such that for every
$\lambda$ in the tubular neighborhood, the integral
\[ 
\langle v,u\rangle= \int_G ( \pi(g)v, \tilde v_0 ) (\pi_{\lambda} (g)
u, v_{-\lambda}) dg
\] 
is absolutely convergent for all $v$ in $\Vmin$ and $u\in
V_{\lambda}$.  The pairing $\langle v,u\rangle$ is $\frakg$-invariant,
i.e. for every $X \in \frakg$, $\langle \pi(X) v, u\rangle+
\langle v, \pi_{\lambda}(X) u\rangle =0$, and $\langle
v_0,v_{\lambda}\rangle \neq 0$ for all $\lambda$ in an open
neighborhood of $0$.
\end{prop}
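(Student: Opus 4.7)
The plan is to verify the three claims — absolute convergence, $\mathfrak g$-invariance, and non-vanishing — in turn.

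For absolute convergence, I would apply H\"older's inequality. Lemma~\ref{L:integrability} supplies a $p<2$ so that the restriction $(\pi(\cdot)v,\tilde v_0)|_G$ lies in $L^{p+\varepsilon_0}(G)$ for every $\varepsilon_0>0$. Pick $q>2$ with $\tfrac{1}{p+\varepsilon_0}+\tfrac{1}{q}\le 1$, which is possible because $p+\varepsilon_0<2$. Invoking Knapp's Theorem~8.48 as recalled in Section~\ref{sec:ppleseries}, shrink the tubular neighborhood of $i\mathfrak a^*$ so that matrix coefficients of $V_\lambda$ lie in $L^q(G)$ uniformly for $\lambda$ in the neighborhood; in particular $(\pi_\lambda(g)u,v_{-\lambda})\in L^q(G)$. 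H\"older then places the integrand $(\pi(g)v,\tilde v_0)(\pi_\lambda(g)u,v_{-\lambda})$ in $L^1(G)$.

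For $\mathfrak g$-invariance, I would use unimodularity of the reductive group $G$: $\int_G R_Xf\,dg=0$ for $f\in L^1(G)$ whose right derivative $R_Xf$ lies in $L^1(G)$, where $R_X$ is right differentiation by $X\in\mathfrak g$. Apply this to $f(g)=(\pi(g)v,\tilde v_0)(\pi_\lambda(g)u,v_{-\lambda})$, noting $R_X(\pi(g)v,\tilde v_0)=(\pi(g)\pi(X)v,\tilde v_0)$ and similarly on the second factor. Differentiation under the integral is legitimized by the same H\"older estimate applied to $\pi(X)v$ and $\pi_\lambda(X)u$, and the Leibniz rule then yields the identity $\langle\pi(X)v,u\rangle+\langle v,\pi_\lambda(X)u\rangle=0$.

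For non-vanishing, I would recognize $F(\lambda):=\langle v_0,v_\lambda\rangle=\int_G\Phi(g)\varphi_\lambda(g)\,dg$ as the spherical Harish-Chandra transform of the bi-$K$-invariant function $\Phi|_G$; by the convergence argument and uniform convergence on compacta, $F$ is holomorphic on the tubular neighborhood. The function $\Phi|_G$ is nonzero and positive-definite (after normalizing $v_0$ to a unit vector so that $\Phi(g)=\langle\pi(g)v_0,v_0\rangle$), so by the Bochner theorem for the Gelfand pair $(G,K)$ the Plancherel density $F|_{i\mathfrak a^*}$ is nonnegative and not identically zero (else spherical inversion would force $\Phi\equiv 0$, contradicting $\Phi(e)=1$). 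Combined with strict positivity of the Harish-Chandra $\Xi$-function $\varphi_0$ on $G$ and the normalization $\Phi(e)=\varphi_0(e)=1$, this yields $F(0)>0$, and continuity then gives $F\ne 0$ on an open neighborhood of $0$. The principal obstacle is precisely this last step: convergence and $\mathfrak g$-invariance follow routinely from H\"older and unimodularity, but pinning the non-vanishing to $\lambda=0$ rather than merely to some open subset of $i\mathfrak a^*$ requires genuinely coupling the Bochner positivity of $\Phi|_G$ with pointwise positivity of $\varphi_0$.
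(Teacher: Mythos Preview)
Your convergence argument via H\"older matches the paper's. For $\mathfrak g$-invariance you take a somewhat different route: you appeal to $\int_G R_Xf\,dg=0$ for $f,R_Xf\in L^1$, whereas the paper justifies the interchange of limit and integral directly by exhibiting a uniform dominating function, bounding $|(\pi_\lambda(gg_t)w,v_{-\lambda})|$ by a constant times $\varphi_\mu(g)$ for $\mu=\Re\lambda$ and all $|t|\le 1$, via \cite[Proposition~7.14]{Knapp} together with the elementary estimate $\varphi_\mu(gg_t)\le C\varphi_\mu(g)$ obtained from the realization of $V_\mu$ as functions on $M\backslash K$. Your version is tidier in principle, but you gloss over why $\int_G R_Xf=0$ holds for non--compactly-supported $f$; that requires an approximation by cutoffs, which is roughly the same amount of work as the paper's dominated-convergence bound.

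The non-vanishing argument has a genuine gap. Bochner for the Gelfand pair $(G,K)$ gives only $F|_{i\mathfrak a^*}\ge 0$ and $F\not\equiv 0$; it does \emph{not} single out $F(0)>0$. Positive-definiteness of $\Phi|_G$ is much weaker than pointwise positivity, and there is no mechanism by which it can be ``coupled with pointwise positivity of $\varphi_0$'' to force $\int_G\Phi\,\varphi_0>0$: if you disintegrate $\Phi|_G$ over the tempered spherical dual and pair against $\varphi_0$ via orthogonality of spherical functions, you recover only the point mass at $\lambda=0$, which is typically zero. The paper avoids Bochner entirely with a simpler observation: a spherical function $\varphi_\lambda$ is strictly positive on all of $G$ whenever $\lambda$ is \emph{real}, since the integral formula over $K$ then has positive integrand. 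The same applies to $\Phi$ as a spherical function on $H$, because the minimal representation has real parameter. Hence $\Phi(g)>0$ and $\varphi_0(g)>0$ for every $g\in G$, so $F(0)=\int_G\Phi\,\varphi_0\,dg>0$ as the integral of a strictly positive analytic function. Continuity of $F$ near $0$ (all that is needed) then follows from dominated convergence using the Harish-Chandra estimates on $\varphi_\lambda$ from \cite{A}.
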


\begin{proof} 
Since $G$ is smaller,  \Cref{L:integrability} states that the
restriction of the matrix coefficients of $V$ to $G$ is
$L^{p+\epsilon}$ for some $p<2$ and every $\epsilon > 0$.  Hence the
integral is convergent, by the H\"{o}lder's inequality, in a suitable
tubular neighborhood of $i\mathfrak a^*$, depending on $p$.

  Let $f(t)=(\pi_{\lambda}(g) \pi_{\lambda} (\exp(tX))u,
  v_{-\lambda})$, where $t\in \mathbb R$. Then
  \begin{equation} \label{eqXuv}
 \langle \pi(X) v, u\rangle =\int_G (\pi (g) v, \tilde v_0)
 \lim_{t\rightarrow 0} \frac{f(t)-f(0)}{t} dg.
 \end{equation}
The $\frakg$-invariance of the pairing is a formal consequence of
$G$-invariance of the measure~$dg$, provided we can switch the order
of integration and differentiation in \eqref{eqXuv}.  This will be
accomplished by the Lebesgue's Dominated Convergence Theorem by
finding an appropriate bound on $\frac{f(t)-f(0)}{t}$, uniform for all
small $t$.  By the mean value theorem, it suffices to find a bound for
$f'(t)=( \pi_{\lambda}(gg_{t}) w, v_{-\lambda})$ where $g_{t}=
\exp(tX)$ and $w=\pi(X) u$.  Note that $f'(0)$ is a matrix coefficient
of $V_{\lambda}$. Let $\mu =\Re(\lambda)$. By \cite[Proposition
    7.14]{Knapp}, there exists a constant $C_w$ such that
 \[ 
 | ( \pi_{\lambda}(g) w, v_{-\lambda})| \leq C_w \varphi_{\mu}(g) 
 \] 
 for all $g\in G$. We claim that there exists a constant $C>0$ such that 
 \[ 
 \varphi_{\mu}(gg_t) \leq C \varphi_{\mu} (g) 
 \] 
 for all $g\in G$ and $|t| \leq 1$. To that end, recall that $V_{\mu}$
 is realized as functions on $M\backslash K$, and $v_{\mu}$ is the
 constant function equal to one.  Let $C$ be such that, for all
 $|t|\leq 1$, the function $\pi_{\mu}(g_t) v_{\mu}$ is bounded by $C$
 on $M\backslash K$.  Since the pairing between $V_{\mu}$ and
 $V_{-\mu}$ is defined as integration of functions over $K$, it
 follows that
 \[ 
 \varphi_{\mu}(gg_t)= (\pi_{\mu}(gg_t) v_{\mu}, v_{-\mu}) =
 (\pi_{\mu}(g_t) v_{\mu}, \pi_{-\mu}(g^{-1} )v_{-\mu}) \leq C(
 v_{\mu}, \pi_{-\mu}(g^{-1}) v_{-\mu}) = C \varphi_{\mu} (g).
 \]
 This proves our claim.
Thus we have a uniform estimate of $|f'(t)|$, for $|t|\leq 1$, by
$C_w C \varphi_{\mu}$ and we can switch the order of integration and
 differentiation by the Lebesgue's Dominated Convergence Theorem.

\smallskip

Finally, to prove non-vanishing of the pairing, note that $\langle
v_0, v_{\lambda}\rangle $ is simply the spherical transform
\[ 
s(\lambda) = \int_G \Phi(g) \varphi_{\lambda} (g) dg. 
\] 
Using the Harish-Chandra estimates for $\varphi_{\lambda}$
\cite[Proposition 3]{A}, it follows, from the Lebesgue's Dominated
Convergence Theorem that $s(\lambda)$ is a continuous function on a
tubular neighborhood of $i\mathfrak a^*$.  (In fact, it is an analytic
function, see \cite{TV}, but we shall not need that.) Recall that
$\varphi_{\lambda} (g)>0$ for $\lambda$ real. For the same reason
$\Phi(g)>0$ since the parameter of the minimal representation is also
real. Thus $s(0)\neq 0$ since $s(0)$ is an integral of a positive,
analytic function.
 \end{proof} 

\Cref{P:convergence} implies that the matrix coefficient pairing
gives a family of $\frakg$-intertwining maps
\begin{equation} \label{eqmlambda}
m_{\lambda} \colon \Vmin \rightarrow V_{\lambda} 
\end{equation}
in the tubular neighborhood such that $m_{\lambda} (v_0) $ is a
non-zero multiple of $v_{\lambda}$ for $\lambda$ in a neighborhood of 0.

\section{Main Results}

\subsection{} \label{sec:VminKtypes}
Let $G\times G'$ be the dual pair in $H$ as before. We shall now give a
description of the maximal compact subgroups $K\times K' \subset
G\times G'$ and the $K_H$-types of the minimal representation
\begin{equation} \label{eqminKtypes}
\Vmin=\bigoplus_{n=0}^{\infty}V(n\omega) 
\end{equation}
where $V(n \omega)$ denotes the irreducible representation of $K_H$ of
the highest weight $n \omega$.  If we assume that $G'$ is of the type
$\bfG_2$, then $K'\cong \SU_2\times_2\SU_2$ while the rest of the
data is given by the following table:
\[
\begin{array}{c|c|c|c|c}
H & K_H & \omega & G & K \\ \hline 
\bfE_{6}  & \Sp_8& 
\omega_4 = (1,1,1,1) & \SL_3(\bbR) & \SO_3 \\
\bfE_{7} & \SU_8 & \omega_4 = (1,1,1,1,0,0,0,0) & \Sp_6(\bbR) & \rU_3 \\ 
\bfE_{8} & \Spin_{16} & \omega_8 = \frac{1}{2}(1,1,1,1,1,1,1,1) &
\bfF_{4} & \SU_2 \times_2 \Sp_6.
\end{array}
\]
Here $\omega_i$ denotes the $i$-th fundamental weight of $K_H$ as given
in \cite{Bou}.  We note that the groups listed in the second column are
in fact 2-fold covers of $K_H$, the maximal compact subgroup of $H$.
For notational convenience we shall often work with these 2-fold
covers, instead of $K_H$ proper.
  
In order to obtain the strong reciprocity statement in the case of the
dual pair in~$\bfE_6$, it is necessary to consider larger,
disconnected groups.  Let $\tilde H$ be a semi-direct product of $H$ and its
group of outer automorphisms  $\mathbb Z/2\mathbb Z$. The maximal compact subgroup
  $\tilde K_H$ of $\tilde H$ is isomorphic to $\Sp_8\times \mathbb
Z/2\mathbb Z$. We let the nontrivial element in $\mathbb Z/2\mathbb Z$
act on the type $V(n\omega)\subset \Vmin$ by $(-1)^n$. In this way
$\Vmin$ becomes a $(\mathfrak h, \tilde K_H)$-module. The centralizer
of $G'$ in $\tilde H$ is $\tilde G$, a semi-direct product of $G$ and $\mathbb Z/2\mathbb Z$. 
The maximal compact subgroup $\tilde K\subset \tilde G$ is isomorphic
to $\rO_3$.

\subsection{} 
Assume for the rest of the section that $G$ is the smaller member of
the dual pair. Using the symmetrizing map, $S(\frakp)$ can be
viewed as a vector subspace of $U(\frakg)$.  By the
Poincar\'{e}-Birkhoff-Witt theorem, multiplication of elements in
$S(\frakp)$ and $U(\mathfrak k)$ gives an isomorphism $U(\frakg)
\cong S(\frakp) \otimes U(\mathfrak k)$, compatible with the
adjoint action of $K$.  Let $\bbC$ be the trivial representation
of $U(\mathfrak k)$, and let
\[ 
E= U(\frakg)\otimes_{U(\mathfrak k)} \bbC\cong S(\frakp).
\] 
The center $Z(\frakg)$ maps into $E$ by $x \mapsto x \otimes 1$. Since
$G$ is split, this map is an isomorphism of $Z(\frakg)$ and $E^K\cong
S(\frakp)^K$.  We also have a decomposition $S(\frakp) \cong
\calH \otimes S(\frakp)^K$, where $\calH$ is an $K$-invariant
subspace of, so-called, harmonic polynomials.  Note that $E$ is
naturally a left $U(\frakg)$-module by left multiplication.
Considering $Z(\frakg)$ as a subalgebra of $U(\frakg)$, $E$ is also a
right $Z(\frakg)$-module.  Recall that $\lambda\in
\fraka_{\bbC}^*$ defines a one-dimensional character
$\chi_{\lambda}$ of $Z(\frakg)$ acting on the line $\mathbb
C_{\lambda}$.  Let
\[ 
E_{\lambda}= E \otimes_{Z(\frakg)} \bbC_{\lambda} \cong \calH.
\]  
Let $V_\lambda$ be the $(\frakg ,K)$-module of the principal series
  representation as in \Cref{sec:ppleseries}. 
By a Frobenius Reciprocity Theorem, we have a canonical map from
$E_{\lambda}$ to $V_{\lambda}$, sending $1\otimes 1$ to $v_{\lambda}$.
By \cite[Section 11.3.6]{RRGII}, this is an isomorphism for a generic
$\lambda$.

\begin{lemma} \label{L:injective} 
Let $X \in S(\frakp)$ be non-zero. Then $\pi_{\lambda}(X)\cdot
v_{\lambda}$ is non-zero for generic $\lambda \in \fraka^*_{\bbC}$.   
\end{lemma}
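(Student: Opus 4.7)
The plan is to factor the map $X \mapsto \pi_{\lambda}(X)\cdot v_{\lambda}$ through the canonical reduction $E \to E_{\lambda}$, and then invoke the fact, already recorded from \cite[Section 11.3.6]{RRGII}, that $E_{\lambda} \to V_{\lambda}$ sending $1\otimes 1 \mapsto v_{\lambda}$ is an isomorphism for generic $\lambda$. This factorization is forced: any $z \in Z(\frakg)$ acts on $v_{\lambda}$ by the scalar $\chi_{\lambda}(z)$, so $\pi_{\lambda}(Yz)\cdot v_{\lambda} = \chi_{\lambda}(z)\,\pi_{\lambda}(Y)\cdot v_{\lambda}$ for every $Y \in U(\frakg)$, which is exactly the defining relation of $E_{\lambda} = E \otimes_{Z(\frakg)} \bbC_{\lambda}$. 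The non-vanishing assertion is thereby reduced to a purely algebraic claim inside $E$: the image of a nonzero $X \in S(\frakp) \cong E$ in $E_{\lambda}$ is nonzero for generic $\lambda \in \fraka_{\bbC}^*$.

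To handle this, I would exploit the freeness of $E$ as a right $Z(\frakg)$-module with the harmonic subspace $\calH \subset E$ serving as a basis -- a fact already encoded in the stated identification $E_{\lambda} \cong \calH$. Fix a vector space basis $\{h_i\}$ of $\calH$. Then every $X \in E$ admits a unique expansion $X = \sum_i h_i z_i$ with $z_i \in Z(\frakg)$, and under this expansion the canonical surjection $E \to E_{\lambda} \cong \calH$ sends $X$ to $\sum_i \chi_{\lambda}(z_i)\, h_i$.

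Assuming $X \neq 0$, there exists an index $i_0$ with $z_{i_0} \neq 0$ in $Z(\frakg)$. Via the Harish-Chandra isomorphism for the split pair, $Z(\frakg) \cong \bbC[\fraka_{\bbC}^*]^W$, and under this identification the function $\lambda \mapsto \chi_{\lambda}(z_{i_0})$ is a nonzero $W$-invariant polynomial on $\fraka_{\bbC}^*$. Hence $\chi_{\lambda}(z_{i_0}) \neq 0$ on a Zariski-dense open subset $U_1 \subset \fraka_{\bbC}^*$, so the image of $X$ in $E_{\lambda}$ is nonzero for every $\lambda \in U_1$. Intersecting $U_1$ with the Zariski-open set $U_2$ on which $E_{\lambda} \to V_{\lambda}$ is an isomorphism yields the desired non-vanishing of $\pi_{\lambda}(X) \cdot v_{\lambda}$ on the Zariski-dense open set $U_1 \cap U_2$.

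The main subtlety is the Kostant--Rallis style freeness of $E$ as a right $Z(\frakg)$-module with basis $\calH$, equivalently the upgrade of the $K$-equivariant vector space decomposition $S(\frakp) \cong \calH \otimes S(\frakp)^K$ to an isomorphism of $Z(\frakg)$-modules via the symmetrization $S(\frakp) \cong E$ and the isomorphism $Z(\frakg) \cong S(\frakp)^K$. Since this is already implicit in the text's identification $E_{\lambda} \cong \calH$, I would treat it as given; a self-contained check would pass to the associated graded, where the symmetrization becomes an algebra isomorphism and the freeness follows from the split-case Kostant--Rallis theorem.
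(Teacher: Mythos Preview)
Your proof is correct and follows essentially the same route as the paper: both use the decomposition $S(\frakp)\cong \calH\otimes S(\frakp)^K$ together with the isomorphism $S(\frakp)^K\cong Z(\frakg)$ to identify the image of $X$ in $E_\lambda\cong\calH$ as $\sum_i \chi_\lambda(z_i)\,h_i$, and then observe that a nonzero coefficient $z_{i_0}$ gives a nonzero polynomial $\lambda\mapsto\chi_\lambda(z_{i_0})$, which combined with the generic isomorphism $E_\lambda\cong V_\lambda$ yields the claim. Your explicit flagging of the freeness of $E$ over $Z(\frakg)$ as the key structural input is apt; the paper uses it in the same way but leaves it implicit in the identification $E_\lambda\cong\calH$.
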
 

\begin{proof} 
Write $X=\sum X_i\otimes Y_i$ under the isomorphism
$S(\frakp)=\mathcal H\otimes S(\frakp)^K$, where the $X_i$'s
are linearly independent elements in $\calH$ and the $Y_i$'s
are non-zero.  Let $Z_i\in Z(\frakg)$ correspond to $Y_i$ under the
isomorphism of $S(\frakp)^K$ and $Z(\frakg)$. Let
$c_i=\chi_{\lambda}(Z_i)$. Note that $c_i\neq 0$ for a generic
$\lambda$.  Under the isomorphisms $V_{\lambda}\cong E_{\lambda}\cong
\calH$, the element $\pi_{\lambda}(X)\cdot v_{\lambda}\in V_{\lambda}$
corresponds to $\sum_i c_i X_i\in \calH$, and this is non-zero for a
generic $\lambda$.
\end{proof}

\begin{cor} \label{cor:injection}
The map $m \colon S(\frakp) \rightarrow \Vmin^{K'}$ given by $X
\mapsto X \cdot v_0$ is an injection.
\end{cor}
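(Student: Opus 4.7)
The plan is to derive a contradiction from the hypothesis that $X \in S(\frakp)$ is a nonzero element satisfying $X \cdot v_0 = 0$ in $\Vmin$, by transferring this vanishing to the principal series via the family of intertwining maps $m_\lambda$ constructed in \Cref{P:convergence}.

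First, for $\lambda$ in the tubular neighborhood of $i\fraka^*$ supplied by \Cref{P:convergence}, I would apply the $\frakg$-intertwining map $m_\lambda \colon \Vmin \to V_\lambda$ to the relation $X \cdot v_0 = 0$. Since $X \in S(\frakp) \subset U(\frakg)$, the intertwining property gives
\[
0 = m_\lambda(X \cdot v_0) = \pi_\lambda(X) \cdot m_\lambda(v_0).
\]
\Cref{P:convergence} guarantees further that there is an open neighborhood $U_0$ of the origin in $\fraka^*_{\bbC}$ on which $m_\lambda(v_0)$ is a nonzero scalar multiple of $v_\lambda$. Therefore $\pi_\lambda(X) \cdot v_\lambda = 0$ for every $\lambda \in U_0$.

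Next, I would invoke \Cref{L:injective}, whose proof exhibits the nonvanishing locus of $\pi_\lambda(X) \cdot v_\lambda$ as a Zariski-open subset $W \subset \fraka^*_{\bbC}$, cut out by the simultaneous nonvanishing of certain polynomials $\chi_\lambda(Z_i)$, and nonempty because $X \ne 0$. Since $U_0$ is a nonempty open subset of $\fraka^*_{\bbC}$ in the analytic topology, it necessarily meets $W$. Choosing any $\lambda \in U_0 \cap W$ contradicts the conclusion of the previous paragraph, forcing $X = 0$.

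The heavy lifting has already been done: all the analysis is absorbed into \Cref{P:convergence} (via the sub-$L^2$ integrability supplied by \Cref{L:integrability}), while the algebraic nonvanishing is precisely \Cref{L:injective}. What remains amounts only to the elementary observation that a nonempty Zariski-open set in $\fraka^*_{\bbC}$ cannot be disjoint from a nonempty analytic open set, so I do not foresee any substantial obstacle in executing this plan.
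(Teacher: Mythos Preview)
Your argument is correct and is essentially the same as the paper's: both combine the family $m_\lambda$ from \Cref{P:convergence} with the generic nonvanishing of \Cref{L:injective} to conclude that $X\cdot v_0\neq 0$. You have merely made explicit the step the paper leaves tacit, namely that the Zariski-open nonvanishing locus of \Cref{L:injective} must meet the analytic neighborhood of $0$ on which $m_\lambda(v_0)\neq 0$.
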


\begin{proof} 
Let $X$ be a non-zero element in $S(\frakp)$.  By \eqref{eqmlambda}
there is a family of $\frakg$-intertwining maps $m_{\lambda} \colon
\Vmin \rightarrow V_{\lambda}$ such that $m_{\lambda}(v_0)$ is a
non-zero multiple of $v_{\lambda}$ for all $\lambda$ in a neighborhood
$B$ of 0. Let $X \in S(\frakp)$ be non-zero.  It follows that
$m_\lambda(X \cdot v_0)$ is a non-zero multiple of $\pi_{\lambda}(X)
\cdot v_{\lambda}$ for all $\lambda \in B$.  By \Cref{L:injective},
$\pi_{\lambda}(X)\cdot v_{\lambda}$ is nonzero for some $\lambda$ in
$B$.  This proves that $X \cdot v_0 \neq 0$.
\end{proof}

\begin{prop} \label{prop:compareK}
The injection $m \colon S(\frakp) \rightarrow \Vmin^{K'}$ in the last
corollary is a bijection. In particular, $U(\frakg) \cdot v_0
= \Vmin^{K'}$.
\end{prop}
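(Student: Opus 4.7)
The plan is to upgrade the injection $m : S(\frakp) \to \Vmin^{K'}$ of \Cref{cor:injection} to a bijection by a dimension count carried out level-by-level in a natural filtration, and then to deduce $U(\frakg) \cdot v_0 = \Vmin^{K'}$ as an immediate consequence.

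First, I would equip both sides with compatible filtrations and check that $m$ respects them. On the source side, use the symmetric-degree grading and set $S^{\le n}(\frakp) = \bigoplus_{k=0}^n S^k(\frakp)$. On the target side, use the $K_H$-type decomposition \eqref{eqminKtypes} and set $F^n \Vmin^{K'} = \bigoplus_{k=0}^n V(k\omega)^{K'}$. The inclusion $m(S^{\le n}(\frakp)) \subset F^n \Vmin^{K'}$ reduces, by iteration from $v_0 \in V(0)$, to the claim that $\frakp_H \cdot V(k\omega) \subset V((k-1)\omega) \oplus V((k+1)\omega)$ inside $\Vmin$. This holds because, in each of the three cases of \Cref{sec:VminKtypes}, a dimension count shows that $\frakp_H$ is the irreducible $K_H$-module $V(\omega)$; the irreducible constituents of $V(\omega) \otimes V(k\omega)$ then have highest weights bounded above by $(k+1)\omega$ in the dominance order, and the only $K_H$-types occurring in $\Vmin$ are those of the form $V(m\omega)$.

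The second and main step is the dimension identity
\[
\dim S^n(\frakp) = \dim V(n\omega)^{K'}, \qquad n \ge 0.
\]
Granted this, each filtered injection $S^{\le n}(\frakp) \hookrightarrow F^n \Vmin^{K'}$ is a map between finite-dimensional vector spaces of equal total dimension, hence bijective; passing to the union over $n$ shows $m$ itself is a bijection. The identity $U(\frakg) \cdot v_0 = \Vmin^{K'}$ then follows from the Poincar\'e--Birkhoff--Witt isomorphism $U(\frakg) \cong S(\frakp) \otimes U(\frakk)$: since $v_0$ is $K$-spherical, $\frakk$ annihilates it, so $U(\frakk)$ acts through its augmentation character, and $U(\frakg) \cdot v_0 = S(\frakp) \cdot v_0 = m(S(\frakp)) = \Vmin^{K'}$.

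The main obstacle is the dimension identity itself. The left-hand side is an elementary count once $\dim \frakp$ is known, but the right-hand side is a genuine branching problem: computing the multiplicity of the trivial $K'$-type in the $K_H$-module $V(n\omega)$. As the introduction emphasizes, this multiplicity is accessible in types $\bfE_6$ and $\bfE_7$ via a see-saw dual pair argument reducing it to known branching rules for quaternionic representations, but is out of reach in type $\bfE_8$; this is precisely the point at which the strong duality in type $\bfE_8$ fails to be established in full.
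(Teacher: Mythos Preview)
Your overall strategy—filter $S(\frakp)$ by symmetric degree and $\Vmin^{K'}$ by $K_H$-type level, check that $m$ respects the filtrations, and then compare dimensions in each graded piece—is precisely the paper's own argument. Your justification of the filtration compatibility via $\frakp_H \cong V(\omega)$ and the tensor-product bound is correct and fleshes out a detail the paper leaves implicit.

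However, your final paragraph misidentifies which branching results are invoked, and hence misstates the scope of the proposition. In \Cref{prop:compareK} the group $G$ is the \emph{smaller} member of the dual pair, so $K'$ is the maximal compact of the \emph{larger} one. In type $\bfE_6$ the smaller member is $\SL_3(\bbR)$ and $K'$ is the maximal compact of $\rG_2$; but in types $\bfE_7$ and $\bfE_8$ the smaller member is $\rG_2$ itself, so $K'$ is $\rU_3$ and $\SU_2\times_2\Sp_6$ respectively. The required identity $\dim S^n(\frakp)=\dim V(n\omega)^{K'}$ is therefore supplied by \Cref{PKprimeinvariants}(i) (the quaternionic see-saw) only for $\bfE_6$, and by the elementary branching of \Cref{PKinvariants} for $\bfE_7$ and $\bfE_8$. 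All three cases are established; \Cref{prop:compareK} holds in full, including type $\bfE_8$. The branching that the paper does \emph{not} know how to compute in type $\bfE_8$ is $\dim V(n\omega)$ fixed by the maximal compact of $\rG_2$; that is needed for the reverse identity $U(\frakg')\cdot v_0=\Vmin^{\tilde K}$, not for the proposition you are proving.
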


\begin{proof}  
Let $S^n(\frakp)$ be the $n$-th symmetric power of $\mathfrak
p$. For every non-negative integer $n$, the injection $m$ restricts to
an injection
\[ 
m_n \colon \oplus_{k=0}^n S^k(\frakp) \rightarrow \oplus_{k=0}^n
V(k\omega)^{K'}
\] 
In order to prove that $m$ is a bijection, it suffices to show that
each $m_n$ is a bijection, and this is a dimension check.  The
branching rule in \Cref{PKprimeinvariants}(i) and \Cref{PKinvariants}
implies that $\dim S^n(\frakp) = \dim V(n\omega)^{K'}$. This proves
the proposition.
\end{proof} 

We now derive several consequences of \Cref{prop:compareK}. Let
$S_{\lambda}$ be the irreducible spherical sub-quotient of
$V_{\lambda}$.  Recall that the isomorphism classes of irreducible
$K$-spherical $(\frakg ,K)$-modules correspond to the maximal ideals
in $Z(\frakg)$.

\begin{cor} 
For every $\lambda \in \fraka_{\bbC}^*$, $S_{\lambda}$ is a
quotient of $\Vmin$, i.e. $\Theta(S_{\lambda})\neq 0$.
\end{cor}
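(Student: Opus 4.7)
My plan is to deduce the corollary from \Cref{prop:compareK} by a Frobenius reciprocity argument, after first promoting $\Vmin^{K'}$ from a $(\frakg,K)$-submodule to a $(\frakg,K)$-direct summand of $\Vmin$.

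First I would record that, since $G\times G'$ is a commuting pair in $H$, the compact group $K'\subset G'$ commutes with both $\frakg$ and $K$ in their actions on $\Vmin$. Compactness of $K'$ then provides the $K'$-isotypic decomposition $\Vmin=\Vmin^{K'}\oplus W$, where $W$ is the sum of the nontrivial $K'$-isotypic components, and both summands are stable under $\frakg$ and $K$. Hence the projection $p\colon\Vmin\twoheadrightarrow\Vmin^{K'}$ is a $(\frakg,K)$-module homomorphism, and in particular $p(v_0)=v_0$.

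Next I would identify $\Vmin^{K'}$ with the universal spherical module $E=U(\frakg)\otimes_{U(\frakk)}\bbC$. The map $E\to\Vmin$ sending $X\otimes 1\mapsto X\cdot v_0$ is well-defined because $\frakk\cdot v_0=0$, and it is $(\frakg,K)$-equivariant since $k\cdot(X\cdot v_0)=\Ad(k)(X)\cdot v_0$. By \Cref{prop:compareK} its image is exactly $\Vmin^{K'}$, and under the PBW identification $E\cong S(\frakp)$ coming from the symmetrizing map it coincides with the bijection $m$ of \Cref{cor:injection}. Thus it is an isomorphism of $(\frakg,K)$-modules $E\xrightarrow{\sim}\Vmin^{K'}$.

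Finally, Frobenius reciprocity gives
\[
\Hom_{(\frakg,K)}(E,S_\lambda)\cong S_\lambda^K=\bbC,
\]
so there is a nonzero, hence (by irreducibility of $S_\lambda$) surjective, $(\frakg,K)$-map $\psi\colon E\to S_\lambda$ sending $1\otimes 1$ to a spherical vector of $S_\lambda$. Composing $\psi$ with the isomorphism $\Vmin^{K'}\cong E$ and with the projection $p$ yields a nonzero $(\frakg,K)$-module map $\Vmin\to S_\lambda$, which on $v_0$ is nonzero; this shows $\Theta(S_\lambda)\neq 0$. I do not anticipate any real obstacle at this stage: all of the work has been absorbed into \Cref{prop:compareK} (whose proof in turn rests on the branching dimension identity $\dim S^n(\frakp)=\dim V(n\omega)^{K'}$), and what remains is essentially formal.
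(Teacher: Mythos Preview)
Your proof is correct and follows essentially the same approach as the paper's own proof. The paper argues tersely that $\Vmin^{K'}$ is a $(\frakg,K)$-summand of $\Vmin$, that $\Vmin^{K'}\cong U(\frakg)\otimes_{U(\frakk)}\bbC$ as $\frakg$-modules (by \Cref{prop:compareK}), and that $S_\lambda$ is a quotient of this universal module; you have simply unpacked each of these three steps in more detail, invoking Frobenius reciprocity explicitly where the paper leaves it implicit.
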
 

\begin{proof} 
Note that $\Vmin^{K'}$ is a summand of $\Vmin$ and $\Vmin^{K'}\cong
U(\frakg) \otimes_{U(\mathfrak k)} \bbC$, as $\frakg$-modules.  The
corollary follows because $S_{\lambda}$ is a quotient of $U(\frakg)
\otimes_{U(\mathfrak k)} \bbC$. 
\end{proof}

We recall that $G$ is the smaller member of the dual pair.  Let
$Z(\frakg) = U(\frakg)^{G}$ be the center of $U(\frakg)$.  There
exists a homomorphism 
\begin{equation} \label{eqgamma}
\gamma \colon Z(\frakg')\rightarrow Z(\frakg)
\end{equation}
such that, for every $z\in Z(\frakg')$, $z = \gamma(z)$ on $\Vmin$.
The map $\gamma$ is surjective in this case if we replace $Z(\frakg)$
by $\tilde{Z}(\frakg) = U(\frakg)^{\tilde G}$.  After taking $\tilde
K$-invariants of both sides in \Cref{prop:compareK}, we get
\[ 
\tilde  Z(\frakg)\cdot v_0 = \Vmin^{\tilde K\times K'} = Z(\frakg') \cdot v_0. 
\]  
Let $\tilde S_{\lambda}$ be the irreducible $\tilde K$-spherical
$(\frakg ,K)$-module containing $S_{\lambda}$. The isomorphism classes
of irreducible $\tilde K$-spherical modules correspond to the maximal
ideals in $\tilde Z(\frakg)$.  Let $S_{\lambda'}$ be the irreducible
$K'$-spherical $(\frakg' ,K')$-module whose infinitesimal character is
the pullback, via $\gamma$, of the infinitesimal character of $\tilde
S_{\lambda}$.

\begin{cor}  \label{CorTheta1}
If $\Theta( S_{\lambda'})\neq 0$ then it is a finite length $(\frakg,
\tilde K)$-module with the unique irreducible quotient isomorphic to
$\tilde S_{\lambda}$.
\end{cor}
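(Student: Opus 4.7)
The plan is to work directly with the Howe quotient, using \Cref{prop:compareK} to exhibit a distinguished cyclic $\tilde K$-spherical generator of $\Theta(S_{\lambda'})$.

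First, following the definition of $\Theta$ recalled in the introduction, write $\Vmin/N \cong \Theta(S_{\lambda'}) \otimes S_{\lambda'}$ as $(\frakg\times\frakg', \tilde K\times K')$-modules, where $N$ is the intersection of kernels of all $(\frakg', K')$-homomorphisms $\Vmin\to S_{\lambda'}$. Since $v_0$ is $\tilde K\times K'$-invariant, its image in $\Vmin/N$ has the form $w_0\otimes v_0'$ with $w_0\in \Theta(S_{\lambda'})^{\tilde K}$ and $v_0'$ a basis of the one-dimensional space $(S_{\lambda'})^{K'}$. Since $K'$-averaging is exact, the quotient map induces a surjection $\Vmin^{K'}\twoheadrightarrow(\Vmin/N)^{K'}=\Theta(S_{\lambda'})\otimes v_0'$; combined with $\Vmin^{K'}=U(\frakg)\cdot v_0$ from \Cref{prop:compareK}, this yields $\Theta(S_{\lambda'})=U(\frakg)\cdot w_0$, and in particular $w_0\neq 0$ because $\Theta(S_{\lambda'})\neq 0$ by hypothesis.

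Next, on $\Vmin/N$ a central element $z\in Z(\frakg')$ acts both as $\gamma(z)\in\tilde Z(\frakg)$ and as the scalar $\chi_{\lambda'}(z)$ coming from the $S_{\lambda'}$-factor, so $\gamma(z)$ acts on $\Theta(S_{\lambda'})$ as $\chi_{\lambda'}(z)=\chi_{\tilde S_\lambda}(\gamma(z))$. Hence $\tilde Z(\frakg)=\gamma(Z(\frakg'))$ acts on $\Theta(S_{\lambda'})$ by the character $\chi_{\tilde S_\lambda}$. Thus $\Theta(S_{\lambda'})$ is a finitely generated $(\frakg,\tilde K)$-module with an infinitesimal character, and standard Harish-Chandra theory gives that it is admissible and of finite length.

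To pin down the irreducible quotients I would first show $\dim\Theta(S_{\lambda'})^{\tilde K}=1$. Taking $\tilde K\times K'$-invariants of the quotient map is again exact, and \Cref{prop:compareK} (after taking $\tilde K$-invariants) gives $\Vmin^{\tilde K\times K'}=\tilde Z(\frakg)\cdot v_0$; its image in $\Theta(S_{\lambda'})\otimes v_0'$ is $\tilde Z(\frakg)\cdot(w_0\otimes v_0')=\mathbb C\cdot w_0\otimes v_0'$ by the central character computation just made. Now let $M$ be the sum of all proper $(\frakg,\tilde K)$-submodules of $\Theta(S_{\lambda'})$. If $M=\Theta(S_{\lambda'})$, then already finitely many proper submodules $N_1,\ldots,N_r$ sum to $\Theta(S_{\lambda'})$; exactness of $\tilde K$-invariants gives $\mathbb C w_0=\sum_i(N_i)^{\tilde K}$, so some $(N_i)^{\tilde K}=\mathbb C w_0$, forcing $N_i\supseteq U(\frakg)\cdot w_0=\Theta(S_{\lambda'})$, a contradiction. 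Hence $M$ is the unique maximal proper submodule, and $\Theta(S_{\lambda'})/M$ is the unique irreducible quotient. It is $\tilde K$-spherical (it receives the nonzero image of $w_0$) with infinitesimal character $\chi_{\tilde S_\lambda}$, hence isomorphic to $\tilde S_\lambda$ by the classification of irreducible $\tilde K$-spherical modules by central characters of $\tilde Z(\frakg)$ recalled just before the statement. The main technical step is the cyclicity $\Theta(S_{\lambda'})=U(\frakg)\cdot w_0$; once this is in hand via \Cref{prop:compareK}, the remaining statements follow formally from the exactness of $\tilde K$-averaging and the surjectivity of $\gamma\colon Z(\frakg')\twoheadrightarrow\tilde Z(\frakg)$.
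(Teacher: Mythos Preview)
Your proof is correct and follows essentially the same approach as the paper's: both exhibit $\Theta(S_{\lambda'})$ as a cyclic $\tilde K$-spherical quotient of $\Vmin^{K'}\cong U(\frakg)\otimes_{U(\frakk)}\bbC$ via \Cref{prop:compareK}, observe that it is annihilated by the maximal ideal of $\tilde Z(\frakg)$ corresponding to $\tilde S_\lambda$ (using the surjectivity of $\gamma$), and then invoke standard finiteness results. The paper's proof is a three-sentence sketch citing \cite{RRGI} for admissibility and finite length and leaving the unique-irreducible-quotient step implicit, whereas you spell out the one-dimensionality of $\Theta(S_{\lambda'})^{\tilde K}$ and the maximal-submodule argument in full; the substance is the same.
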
 
\begin{proof} 
Since $\Theta( S_{\lambda'}) \otimes S_{\lambda'}$ is a quotient of
$\Vmin$, it follows that $\Theta( S_{\lambda'})$ is a quotient of
$\Vmin^{K'}$ and, therefore, a quotient of the cyclic $(\frakg, \tilde
K)$-module $U(\frakg)\otimes_{U(\mathfrak k)} \bbC$ where $\bbC$ is
the trivial representation of $\tilde K$. The module
$\Theta(S_{\lambda'})$ is also annihilated by the maximal ideal of
$\tilde Z(\frakg)$ corresponding to $\tilde S_{\lambda}$. By
  \cite[Corollary 3.4.7]{RRGI} $\Theta(S_{\lambda'})$ is admissible
  and by \cite[Theorem 4.2.1]{RRGI} it is of finite length.
 \end{proof}

\subsection{}
Now we want to go in the opposite direction, and show that $U(\frakg')
\cdot v_0 = \Vmin^{\tilde K}$.  Since the restriction of the matrix
coefficients of $\Vmin$ to $G'$ are not contained in $L^{2}(G')$, the
strategy we used for $G$ cannot be applied. However now we have
another way. Let
\[ 
U = U(\frakg') \cdot v_0\subseteq \Vmin^{\tilde K}.
\]  
Note that $U^{K'}=Z(\frakg') \cdot v_0$, and this is a direct
summand of $U$, considered a $Z(\frakg')$-module. (This is because $U$
is a direct sum of its $K'$-type subspaces, each of which is a
$Z(\frakg')$-submodule.)  Let $\chi_{\lambda}$ be a character of
$Z(\frakg)$ and let $\chi_{\lambda'}=\gamma^*(\chi_{\lambda})$ be the
character of $Z(\frakg')$ on a one-dimensional space
$\bbC_{\lambda'}$, obtained by pulling back by $\gamma$.  Then
$U_{\lambda'}=U\otimes_{Z(\frakg')} \mathbb C_{\lambda'}$ is a
quotient of $U$, generated by the unique $K'$-fixed line.  This proves
the following lemma.

\begin{lemma} \label{Pquotient} 
If $\chi_{\lambda'}=\gamma^*(\chi_{\lambda})$ then $S_{\lambda'}$ is a
quotient of $U$. \qed
\end{lemma}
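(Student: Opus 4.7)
The plan is to realize $S_{\lambda'}$ as a quotient of the finite-length module $U_{\lambda'}$, which is itself a quotient of $U$. Granting that $U_{\lambda'}$ is non-zero and cyclic with $K'$-spherical generator of infinitesimal character $\chi_{\lambda'}$, standard Harish-Chandra theory will produce the surjection $U_{\lambda'} \twoheadrightarrow S_{\lambda'}$, and composing with $U \twoheadrightarrow U_{\lambda'}$ will give the lemma.

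The critical step is to verify non-vanishing of $U_{\lambda'}$. Taking $K'$-invariants is exact and commutes with the central tensor product, so $U_{\lambda'}^{K'} = U^{K'} \otimes_{Z(\frakg')} \bbC_{\lambda'}$ is a quotient of the cyclic $Z(\frakg')$-module $U^{K'} = Z(\frakg') \cdot v_0$; non-vanishing thus reduces to the containment $\Ann_{Z(\frakg')}(v_0) \subseteq \ker \chi_{\lambda'}$. On $\Vmin$ the action of $Z(\frakg')$ factors through $\gamma$, and \Cref{prop:compareK} identifies $\Vmin^{K'} \cong U(\frakg) \otimes_{U(\frakk)} \bbC$ as $\frakg$-modules, carrying $v_0$ to $1 \otimes 1$. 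Hence $\Ann_{U(\frakg)}(v_0) = U(\frakg)\frakk$ and $\Ann_{Z(\frakg)}(v_0) = Z(\frakg) \cap U(\frakg)\frakk = 0$ by injectivity of the Harish-Chandra homomorphism $Z(\frakg) \hookrightarrow U(\frakg)/U(\frakg)\frakk$. Consequently $\Ann_{Z(\frakg')}(v_0) = \ker \gamma$, and the hypothesis $\chi_{\lambda'} = \chi_\lambda \circ \gamma$ yields $\ker \gamma \subseteq \ker \chi_{\lambda'}$, so $U_{\lambda'}^{K'} \cong \bbC$ and $U_{\lambda'} \neq 0$.

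Once non-vanishing is established, $U_{\lambda'}$ is a cyclic $(\frakg', K')$-module with $K'$-fixed cyclic generator (the image of $v_0$) and infinitesimal character $\chi_{\lambda'}$. By \cite[Corollary 3.4.7]{RRGI} and \cite[Theorem 4.2.1]{RRGI} it is admissible and of finite length, hence has an irreducible quotient $W$. Since the cyclic generator projects to a non-zero $K'$-fixed vector in $W$, the module $W$ is $K'$-spherical with infinitesimal character $\chi_{\lambda'}$; by the uniqueness of such a spherical irreducible, $W \cong S_{\lambda'}$, and composing the surjections $U \twoheadrightarrow U_{\lambda'} \twoheadrightarrow S_{\lambda'}$ proves the lemma. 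The whole argument hinges on the non-vanishing verification above: it is the only point at which the compatibility $\chi_{\lambda'} = \gamma^*(\chi_\lambda)$ enters, and it depends essentially on \Cref{prop:compareK} to pin down $\Ann_{Z(\frakg)}(v_0) = 0$; everything else is routine Harish-Chandra admissibility and finite-length machinery.
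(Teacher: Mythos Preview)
Your argument is correct and follows the same route as the paper: form $U_{\lambda'}=U\otimes_{Z(\frakg')}\bbC_{\lambda'}$, verify that its $K'$-fixed space is the one-dimensional line generated by the image of $v_0$, and deduce that $S_{\lambda'}$ is its (unique) irreducible quotient. You are in fact more careful than the paper, which asserts that $U_{\lambda'}$ is ``generated by the unique $K'$-fixed line'' without spelling out the non-vanishing; your use of \Cref{prop:compareK} to show $\Ann_{Z(\frakg)}(v_0)=0$ and hence $\Ann_{Z(\frakg')}(v_0)=\ker\gamma\subseteq\ker\chi_{\lambda'}$ makes this step explicit.
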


We will now state the result corresponding to \Cref{prop:compareK}.
\begin{prop} 
Assume $H$ is of the type $\bfE_6$ or $\bfE_7$. Then $U(\frakg') \cdot
v_0 = \Vmin^{\tilde{K}}$.
\end{prop}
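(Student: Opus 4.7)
The plan is to imitate the strategy of \Cref{prop:compareK}, with the $K_H$-type filtration of $\Vmin$ (rather than integrability of matrix coefficients) doing most of the work. Let $U := U(\frakg') \cdot v_0 \subseteq \Vmin^{\tilde K}$. The PBW decomposition $U(\frakg') = S(\frakp') \oplus U(\frakg')\frakk'$, combined with $\frakk' \cdot v_0 = 0$, gives $U = S(\frakp') \cdot v_0$. Using \eqref{eqminKtypes}, I filter $\Vmin$ by $V^{\le n} := \bigoplus_{k=0}^{n} V(k\omega)$. Since $\frakp_H \cong V(\omega)$ as a $K_H$-module and the $K_H$-types of $\Vmin$ form the ladder $\{V(n\omega)\}_{n \ge 0}$, a standard computation with the minimal representation shows that $\frakp_H \cdot V(n\omega) \subseteq V((n-1)\omega) \oplus V((n+1)\omega)$. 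A fortiori $\frakp' \cdot V^{\le n} \subseteq V^{\le n+1}$, so $S^n(\frakp') \cdot v_0 \subseteq V^{\le n}$ for every $n$.

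Define the top-degree map
\[
\phi_n \colon S^n(\frakp') \longrightarrow V(n\omega)^{\tilde K}, \qquad X \longmapsto \pr_{V(n\omega)}(X \cdot v_0).
\]
This map is $K'$-equivariant ($v_0$ is $K'$-fixed and $K'$ acts on $\frakp'$ by the adjoint action) and lands in $V(n\omega)^{\tilde K}$ because $\tilde K \subseteq \tilde G$ centralizes $G' \supseteq \frakp'$ and fixes $v_0$. The identity $U = \Vmin^{\tilde K}$ then follows by induction on $n$, once every $\phi_n$ is shown surjective: given $v \in V(n\omega)^{\tilde K}$ and $X \in S^n(\frakp')$ with $\phi_n(X) = v$, the element $X \cdot v_0 - v$ lies in $V^{\le n-1} \cap \Vmin^{\tilde K} \subseteq U$ by the inductive hypothesis, so $v \in U$.

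The main obstacle is the surjectivity of $\phi_n$. My plan is to carry out a $K'$-equivariant isotypic comparison: compute $V(n\omega)^{\tilde K}$ as a $K'$-representation via the branching from $\tilde K_H$ to $\tilde K \times K'$, compare with the $K'$-decomposition of $S^n(\frakp')$, and verify surjectivity $K'$-type by $K'$-type. This branching is accessible for $\bfE_6$ (through $\Sp_8 \downarrow \rO_3 \times (\SU_2 \ttwo \SU_2)$) and for $\bfE_7$ (through $\SU_8 \downarrow \rU_3 \times (\SU_2 \ttwo \SU_2)$), and reduces via the see-saw dual pair argument indicated in the introduction to the quaternionic branching rule of \cite{LokeJFA}. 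For $\bfE_8$ the analogous branching is out of reach, which is exactly why the proposition is stated only for the first two cases. As a base point for the comparison, the already-established equality $\Vmin^{\tilde K \times K'} = Z(\frakg') \cdot v_0$ furnishes surjectivity of $\phi_n$ on the $K'$-trivial isotypic component, and \Cref{Pquotient} provides the bridge that turns the isotypic surjectivity into the desired module-theoretic statement.
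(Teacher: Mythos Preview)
Your reduction to the surjectivity of the graded maps $\phi_n \colon S^n(\frakp') \to V(n\omega)^{\tilde K}$ is sound, but the plan for actually proving that surjectivity has a real gap. Knowing the $K'$-decomposition of the source and target of a $K'$-equivariant map tells you nothing about whether that particular map is surjective; you still need some input that pins down the image, typically an injectivity statement plus a dimension count. Your proposed ``base point'' does not supply this: the established equality $\Vmin^{\tilde K\times K'}=Z(\frakg')\cdot v_0$ says only that the \emph{filtered} map $S(\frakp')^{K'}\to \Vmin^{\tilde K\times K'}$ is surjective, and filtered surjectivity does not in general force the associated graded map (your $\phi_n$ on $K'$-invariants) to be surjective. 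In the $\bfE_7$ case the difficulty is sharper, since $\dim S^n(\frakp')>\dim V(n\omega)^{\tilde K}$ for $n\ge 4$, so the kernel has to be identified before any isotypic comparison can bite. Finally, the quaternionic see-saw in the paper produces only the scalar $\dim V(n\omega)^{K'}$ (with $K'$ the maximal compact of $\rG_2$); it does not give the full $\tilde K\times K'$-branching you would need, and in your $\bfE_7$ setup $K'\cong\rU_3$, which the see-saw does not address at all.

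What the paper does instead is exactly the missing injectivity step. Using \Cref{Pquotient}, one knows that for generic parameters $\lambda'$ in the image of $\gamma^*$ the irreducible spherical module $S_{\lambda'}$ is a quotient of $U=U(\frakg')\cdot v_0$; feeding this into \Cref{L:injective} (applied to $G'$) as in the proof of \Cref{cor:injection} shows that $X\mapsto X\cdot v_0$ is injective on $S(\frakp')$ for $\bfE_6$, and injective on $S(\frakp')/pS(\frakp')$ for $\bfE_7$, where $p$ is the degree-$4$ generator of the kernel of $\gamma$ under $S(\frakp')^{K'}\cong Z(\frakg')$. The result then follows from a pure dimension comparison: $\dim S^n(\frakp')=\dim V(n\omega)^{\tilde K}$ in the $\bfE_6$ case (\Cref{PKinvariants}), and $\dim S^n(\frakp')-\dim S^{n-4}(\frakp')=\dim V(n\omega)^{\tilde K}$ in the $\bfE_7$ case (\Cref{PKprimeinvariants}(ii)). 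If you want to rescue your approach, the cleanest fix is to use \Cref{Pquotient} and \Cref{L:injective} to prove that $\bigoplus_n\phi_n$ is injective (respectively, injective on $S(\frakp')/pS(\frakp')$), after which your filtration argument combined with the dimension identities immediately yields surjectivity of each $\phi_n$.
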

\begin{proof}
We need to discuss on a case by case basis.

\smallskip 
\noindent 
\underline{Case $\bfE_6$}: In this case the map $\gamma$ is a
bijection. Thus $S_{\lambda'}$ is irreducible for a generic $\lambda'$
and it is a quotient of $U$.  We can apply \Cref{L:injective} to $G'$
and, arguing as in the proof \Cref{cor:injection}, it follows that the
map $X \mapsto X\cdot v_0$ is an injection of $S(\frakp')$ into
$\Vmin^{\tilde K}$.  The branching rule in \Cref{PKinvariants} implies
that this map is also a bijection.

\smallskip

\noindent 
\underline{Case $\bfE_7$}: In this case the map $\gamma$ is a
surjection and the kernel is generated by a polynomial which under the
isomorphism $S(\frakp')^{K'}\cong Z(\frakg')$ corresponds to a
homogeneous polynomial $p$ of degree 4.  If $\chi_{\lambda'} =
\gamma^*(\chi_{\lambda})$ then by \cite[Theorem 6]{LSIsrael}
$S_{\lambda'}$ is irreducible for a generic $\lambda$ and, by
\Cref{Pquotient}, it is a quotient of $U$. An obvious alteration of
the proof of \Cref{cor:injection} implies that the map $X \mapsto
X\cdot v_0$ is an injection of $S(\frakp')/p S(\frakp')$ into
$\Vmin^{K}$ (in this case $K=\tilde K$).  The branching rule in
\Cref{PKprimeinvariants}(ii) implies that this is also a bijection.
\end{proof}

\begin{cor} \label{CorTheta2}
Assume $H$ is of the type $\bfE_6$ or $\bfE_7$.  Let $S_{\lambda'}$ be
an irreducible $K'$-spherical module whose infinitesimal character is
the pullback, via $\gamma$ in \eqref{eqgamma}, of the
  infinitesimal character of an irreducible $\tilde K$-spherical
  module $\tilde S_{\lambda}$. Then
\begin{itemize} 
\item $\Theta(S_{\lambda'})\neq 0$ and
\item $\Theta( \tilde S_{\lambda})$ is a finite length $(\frakg',
  K')$-module with the unique irreducible quotient isomorphic to
  $S_{\lambda'}$. \qed
\end{itemize} 
 \end{cor}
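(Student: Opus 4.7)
The plan is to deduce both bullets from the identity $\Vmin^{\tilde K} = U(\frakg') \cdot v_0$ just established, using \Cref{CorTheta1} as an additional input to handle non-vanishing in the second bullet.

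For the first bullet, I will apply \Cref{Pquotient}, which via that identity realizes $S_{\lambda'}$ as a $(\frakg', K')$-quotient of $\Vmin^{\tilde K}$. Since $\tilde K$ is compact and its action on $\Vmin$ commutes with that of $(\frakg', K')$, the decomposition of $\Vmin$ into $\tilde K$-isotypic components is a decomposition of $(\frakg',K')$-modules; projecting onto the trivial isotypic summand and composing with $\Vmin^{\tilde K} \twoheadrightarrow S_{\lambda'}$ produces a nonzero $(\frakg',K')$-homomorphism $\Vmin \to S_{\lambda'}$. Hence $\Theta(S_{\lambda'}) \neq 0$.

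For the second bullet, I must first establish $\Theta(\tilde S_\lambda) \neq 0$. By the first bullet the hypothesis of \Cref{CorTheta1} is met, so $\tilde S_\lambda$ is the unique irreducible quotient of $\Theta(S_{\lambda'})$. Composing the quotients gives a nonzero $(\frakg \oplus \frakg', \tilde K \times K')$-surjection
\[
\Vmin \twoheadrightarrow \Theta(S_{\lambda'}) \otimes S_{\lambda'} \twoheadrightarrow \tilde S_\lambda \otimes S_{\lambda'},
\]
exhibiting $\tilde S_\lambda \otimes S_{\lambda'}$ as a quotient of $\Vmin$. Pairing the second factor against any nonzero linear functional on $S_{\lambda'}$ produces a nonzero $(\frakg, \tilde K)$-map $\Vmin \to \tilde S_\lambda$, so $\Theta(\tilde S_\lambda) \neq 0$, and moreover $S_{\lambda'}$ is a quotient of $\Theta(\tilde S_\lambda)$.

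The remaining structural claims mirror the proof of \Cref{CorTheta1}. Taking $\tilde K$-invariants of the Howe identification $\Vmin / I \cong \Theta(\tilde S_\lambda) \otimes \tilde S_\lambda$, and using that $\tilde S_\lambda^{\tilde K}$ is one-dimensional, presents $\Theta(\tilde S_\lambda)$ as a $(\frakg',K')$-quotient of $\Vmin^{\tilde K} = U(\frakg') \cdot v_0$, hence of the cyclic module $U(\frakg') \otimes_{U(\frakk')} \bbC$. Because $Z(\frakg')$ acts on $\Vmin$ through $\gamma$ and $\tilde Z(\frakg)$ acts on $\tilde S_\lambda$ via $\chi_\lambda$, the character $\chi_{\lambda'} = \gamma^*(\chi_\lambda)$ annihilates $\Theta(\tilde S_\lambda)$; \cite[Corollary 3.4.7]{RRGI} and \cite[Theorem 4.2.1]{RRGI} then yield admissibility and finite length. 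Any irreducible quotient is $K'$-spherical (the generator of $\Theta(\tilde S_\lambda)$ is $K'$-fixed) and carries infinitesimal character $\chi_{\lambda'}$, forcing it to be isomorphic to $S_{\lambda'}$; uniqueness follows because the $K'$-invariants of $U(\frakg') \otimes_{U(\frakk')} \bbC / \ker \chi_{\lambda'}$ are one-dimensional, whence so are those of $\Theta(\tilde S_\lambda)$. The main obstacle is the non-vanishing step, which forces one to chain the first bullet through \Cref{CorTheta1}; once that is secured, everything else is formal.
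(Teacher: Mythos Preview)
Your argument is correct and matches what the paper intends by its \qed: both bullets follow from the identity $U(\frakg')\cdot v_0=\Vmin^{\tilde K}$ by arguments mirroring the unnumbered corollary after \Cref{prop:compareK} and \Cref{CorTheta1}, with the roles of $G$ and $G'$ swapped. Your chain through \Cref{CorTheta1} to secure $\Theta(\tilde S_\lambda)\neq 0$ is a valid alternative to directly observing that the isomorphism $\Vmin^{K'}\cong U(\frakg)\otimes_{U(\frakk)}\bbC$ from \Cref{prop:compareK} is already a $(\frakg,\tilde K)$-module isomorphism (since $v_0$ is $\tilde K_H$-fixed), which exhibits $\tilde S_\lambda$ as a quotient of $\Vmin$ without the detour.
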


\section{Some simple branching rules}

The goal of this section is to compute the dimensions of
$V(n\omega)^{K}$ where $K$ is the maximal compact subgroup of the
centralizer of $\rG_2$, and $V(n\omega)$ are the types of the minimal
representation \eqref{eqminKtypes}.

\subsection{}
We start with three families of branching rules in a more general
setting than what we need, but the proofs are not more difficult.  All
groups are complex algebraic groups in this subsection unless
otherwise stated.

Let $R= \Sp_{2m}$ (respectively $\GL_{2m}$ and $\Spin_{4m}$).  Note
that $R$ acts naturally on its standard representation $V =\bbC^{2m}$
(respectively $\bbC^{2m}$, $\bbC^{4m}$).  Let $P$ be the maximal
parabolic subgroup of $R$ stabilizing an isotropic subspace of $V$ of
dimension $m$ (respectively $m$, $2m$). Let $P = MN$ be its Levi
decomposition. We tabulate $M$ and define a fundamental weight
$\omega$ of $R$ which defines a one-dimensional character of $P$.
\[
\begin{array}{c|c|c}
R &  \omega &  M  \\ \hline 
\Sp_{2m} & \omega_m = (1^m) &  \GL_m  \\
\GL_{2m} & \omega_m = (1^m,0^m) & \GL_m \times \GL_m  \\
 \Spin_{4m}  & \omega_{2m} = \frac{1}{2}(1^{2m}) &
{\wtGL_{2m}} =\GL_1 \times_m \SL_{2m} 
\end{array}
\]
Here $1^m$ denotes the $m$-tuple $(1,\ldots, 1)$ etc. Let $V(n\omega)$
be the the irreducible representation of $R$ with the highest weight
$n\omega$.  The next proposition describes the restriction of $V(n
\omega)$ from $R$ to~$M$. (The highest weights for $M$ are with
respect to the Borel subgroup of upper-triangular matrices.)

\begin{prop} \label{PBranching}
Let $n$ be a positive integer. We have
\begin{enumerate}[(i)]
\item 
\[
\Res^{\Sp_{2m}}_{\GL_m} V(n \omega_m) = \bigoplus_{\lambda} V(\lambda)
\]
where the sum is taken over all highest weights $\lambda = (\lambda_1,
\ldots, \lambda_m)$ of $\GL_m$ such that 
\[
n \geq \lambda_1 \geq \lambda_2 \geq \cdots \geq \lambda_m \geq -n
\]
and $\lambda_i \equiv n \pmod{2}$ for all $i$;

\item
\[
\Res^{\GL_{2m}}_{\GL_m \times \GL_m} V(n \omega_m) =
\bigoplus_{\lambda} V(\lambda) 
\]
where the sum is taken over all highest weights $\lambda = (\lambda_1,
\ldots, \lambda_{2m})$ of $\GL_m\times \GL_m$ such that 
\[
n \geq \lambda_1  \geq \cdots \geq \lambda_m \geq 0 
\]
and $\lambda_{m+i} = n- \lambda_{m-i+1}$ for all $i=1, \ldots, m$;
\item
\[
\Res^{\Spin_{4m}}_{\wtGL_{2m}} V(n \omega_{2m}) =
\bigoplus_{\lambda} V(\lambda) 
\]
where the sum is taken over all highest weights $\lambda =
(\lambda_1,\ldots, \lambda_{2m})$ of $\wtGL_{2m}$ such that
\[
\frac{n}{2} \geq \lambda_1 \geq \lambda_2 \geq \cdots \geq \lambda_{2m}
\geq -\frac{n}{2}
\]
$\lambda_{2i-1} = \lambda_{2i}$ for $i= 1, \ldots, m$ and $\lambda_j
\equiv \frac{n}{2} \pmod{\bbZ}$ for all $j = 1, \ldots, 2m$.
\end{enumerate}
\end{prop}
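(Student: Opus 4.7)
All three statements concern the restriction of an irreducible representation $V(n\omega)$ of a classical group $R$ to the Levi $M$ of a maximal parabolic $P=MN$ with abelian nilradical, where $\omega$ is the fundamental weight that vanishes on the derived subgroup of $M$. Because $\mathfrak n$ is abelian, the highest weight vector $v_0$ generates $V(n\omega)$ as a module over $U(\mathfrak n^-)=S(\mathfrak n^-)$, yielding an $M$-equivariant surjection $S(\mathfrak n^-)\otimes \bbC_{n\omega}\twoheadrightarrow V(n\omega)$. Every $M$-type of $V(n\omega)$ therefore occurs among the $M$-types of the left-hand side, and my plan is to exploit the classical invariant-theoretic description of each $S(\mathfrak n^-)$.

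For case (ii) the cleanest route bypasses the filtration entirely. The statement is simply the Littlewood--Richardson rule applied to the rectangular partition $(n^m, 0^m)$:
\[
V_{\GL_{2m}}((n^m,0^m))\big|_{\GL_m\times \GL_m} = \bigoplus_{\mu,\nu} c^{(n^m)}_{\mu,\nu}\, V(\mu)\otimes V(\nu),
\]
and the LR coefficient for a rectangular shape is well known to equal $1$ precisely when $\mu\subset (n^m)$ and $\nu_i=n-\mu_{m+1-i}$, which is exactly the stated condition.

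For cases (i) and (iii) I would use the $S(\mathfrak n^-)$-surjection together with the classical $\GL$-decompositions of symmetric and exterior algebras. In case (i), $\mathfrak n^-\cong \Sym^2(\bbC^m)^*$ as a $\GL_m$-module, and the classical identity $S^\bullet(\Sym^2\bbC^m) = \bigoplus_\nu V_{\GL_m}(2\nu)$ (over partitions $\nu$ of length $\leq m$) forces every $\GL_m$-type of $V(n\omega_m)$ to have highest weight of the form $\lambda = n\omega_m - 2\nu$; this automatically yields both the bound $\lambda_i\in[-n,n]$ and the parity condition $\lambda_i\equiv n\pmod 2$. In case (iii), $\mathfrak n^-\cong \Lambda^2\bbC^{2m}$, and the classical decomposition $S^\bullet(\Lambda^2\bbC^{2m})=\bigoplus_\lambda V_{\GL_{2m}}(\lambda)$ over partitions with columns of even length produces exactly the pairing condition $\lambda_{2i-1}=\lambda_{2i}$ after the $\wtGL_{2m}$ twist.

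The remaining step in (i) and (iii) is to verify the inclusion is an equality: each listed $\lambda$ must appear with multiplicity exactly one. I would establish this by matching the total dimension $\sum_\lambda \dim V_M(\lambda)$ against the Weyl dimension formula for $V_R(n\omega)$. I expect case (iii) to be the main obstacle, because the half-integer shifts, the upper bound $|\lambda_i|\leq n/2$, and the $\wtGL_{2m}$ double cover all complicate the bookkeeping; a see-saw or direct character-theoretic argument may be needed to pin down the multiplicities rather than merely bound them from above via $S(\mathfrak n^-)$.
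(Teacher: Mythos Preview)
Your route is genuinely different from the paper's, and largely sound, but there is one small slip and one place where the paper's argument is cleaner.

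\textbf{The slip.} In cases (i) and (iii) the surjection $S(\mathfrak n^-)\otimes\bbC_{n\omega}\twoheadrightarrow V(n\omega)$ does \emph{not} by itself give the lower bound $\lambda_m\ge -n$ (resp.\ $\lambda_{2m}\ge -n/2$): partitions $\nu$ indexing the summands of $S(\mathfrak n^-)$ have unbounded parts, so only the upper bound and the parity/pairing conditions come out. The lower bound follows instead from the trivial remark that any $M$-highest weight is a weight of the $R$-module $V(n\omega)$, hence lies in the convex hull of the Weyl orbit of $n\omega$; the paper opens with exactly this observation. With that fix your necessary conditions and multiplicity-one bound are correct.

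\textbf{Comparison.} The paper treats all three cases uniformly via Borel--Weil: it realizes $V(n\omega)$ as sections on $R/P$, observes that $M$ has an open orbit on $R/P\times M/B$ (the generic stabilizer in $\bar N$ is $\rO_m$, the diagonal $\GL_m$, and $\Sp_{2m}$ respectively, each spherical in $M$), and reads off both multiplicity-freeness and the constraints on $\lambda$ from the character identity $\lambda|_{S\cap T}=n\omega|_{S\cap T}$. The existence of every listed $\lambda$ is then obtained by a short induction on $n$: the product of nonzero $M$-highest weight sections of weights $\lambda$ in $V((n-1)\omega)$ and $\lambda'$ in $V(\omega)$ is a nonzero section of weight $\lambda+\lambda'$ in $V(n\omega)$. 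This replaces your proposed dimension match entirely and works identically in all three cases. Your Littlewood--Richardson argument for (ii) and the $S(\mathfrak n^-)$ filtration for (i), (iii) are valid alternatives, but the final dimension identity you would need for general $m$ (especially in (iii) with the half-integer shifts) is a nontrivial combinatorial computation that the paper's section-multiplication trick sidesteps in one line.
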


\begin{proof}
First we observe that a highest weight of $M$ is also a weight of
$R$.  It follows that the bounds on $\lambda_i$ hold. 

By the Borel-Weil theorem, we interpret $V(n \omega)$ as the space of
holomorphic sections of an $R$-equivariant line bundle on
$R/P$. We also interpret an irreducible representation~$V(\lambda)$ of
$M$ with the highest weight $\lambda$ as the space of sections of an
$M$-equivariant line bundle on $M/B$ where $B$ is a Borel subgroup of
$M$.

\begin{claim*}
The Levi subgroup $M$ has an open orbit on $R/P \times M/B$.  
\end{claim*}

\begin{proof}
Let $\bar{P} = M \bar{N}$ be the opposite parabolic subgroup to $P$.
Then the image of $\bar{N}$ is an open subset of $R/P$. The Levi
subgroup $M$ acts on $\bar{N}$ with an open orbit. By choosing an
appropriate point $x$ in the open orbit, we may assume that the
stabilizer $S$ of $x$ in $M$ is isomorphic to $\rO_m$, $\triangle
\GL_m$ and $\Sp(2m)$ respectively in the three cases. It is well known
that $S$ has an open orbit in $M/B$. This proves our claim.
\end{proof}

Pick $B$ in a standard position, i.e. containing the torus $T$ of
diagonal matrices in $M$ such that $S\cdot B/B$ is an open orbit in
$M/B$.  The claim implies that the restriction of $V(n \omega)$ to $M$
is multiplicity free. Moreover, only $V(\lambda)$ such that $\lambda =
n\omega$ on $S\cap T$ can appear in the restriction.  We will compute
$S \cap T$ in each case, and then we will derive a necessary condition
on $\lambda$ such that $V(\lambda) \subset V(n \omega)$:
\begin{itemize}
\item If $R = \Sp(2m)$ then $S\cap T = \{ t\in \GL_m : t \text{
  diagonal and } t^2=1 \}$. Hence
\[ 
\lambda_i \equiv n \pmod{2} \text{ for } i=1, \ldots, m.
\] 

\item If $R = \GL_{2m}$ then $S\cap T = \{ (t,t) \in \GL_m^2 : t
  \text{ diagonal} \}$. Hence
\[ 
\lambda_{m+i} = n -  \lambda_{m-i+1} \text{ for } i=1, \ldots, m.
\] 

\item If  $R = \Spin_{4m}$ then $S\cap T = \{ (t_1,t_1^{-1}, t_2,t_2^{-1},
  \ldots t_m,t_m^{-1}) : t_i \in \bbC \}$. Hence 
  \[ \lambda_{2i-i} =
  \lambda_{2i}\text{ for } i=1, \ldots, m.
  \] 
\end{itemize}
Summing up, we have shown that in all three cases of the proposition,
the left hand sides are contained in the respectively right hand
sides.

Finally we have to show the containments are equalities. We will prove
this by induction on $n$. If $n = 1$, the proposition holds by
checking the weights of $V(n \omega)$.  Suppose $n-1$ is true. Then we
have an $R$-equivariant map
\[
V((n-1)\omega) \otimes V(\omega) \rightarrow V(n \omega) 
\]
given by multiplying holomorphic sections. If $f \in V((n-1) \omega)$
and $f' \in V(\omega)$ are nonzero highest weight vectors of $M$ of
weights $\lambda$ and $\lambda'$ respectively, then the product
$f\cdot f' \in V(n \omega)$ is a nonzero highest weight vector of
weight $\lambda + \lambda'$. It follows by induction that all
irreducible representations of $M$ on the right hand sides occur in
$V(n \omega)$.  This completes the induction and proves the
proposition.
\end{proof}

\subsection{} 
Let $\tilde H$ denote the semi-direct product of $H$ and its group of outer automorphisms in all three cases, in particular, 
$H= \tilde H$ unless the type of $H$ is $\mathbf E_6$. Let  $\tilde K_H$ be the maximal compact subgroup of $\tilde H$.
The representations discussed in \Cref{PBranching}, for $m=4$, are
essentially the $\tilde K_H$-types of the minimal representations
 \eqref{eqminKtypes}.  Recall that, if $\tilde K_H\cong \Sp_8
\times (\bbZ/2 \bbZ)$, we extend $V(n \omega)$ to a representation of
$\tilde K_H$ such that the nontrivial element in $\bbZ/2 \bbZ$ acts on
by $(-1)^n$.  We assume now that $G'$ is of the type $\mathbf G_2$. Let $\tilde G$ be the centralizer of $G'$ in $\tilde H$.

\begin{prop}  \label{PKinvariants} 
Let $\tilde K$ be the maximal compact subgroup of $\tilde G$ and 
let $\frakk' \oplus \frakp'$ be the Cartan decomposition of the Lie algebra of $G'$, the split group of type $\mathbf G_2$. Then, for all $n$,
\[ 
\dim V(n \omega)^{\tilde K} =  \dim S^n(\frakp').
\] 
\end{prop}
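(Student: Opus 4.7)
Since the split real form of $\bfG_2$ has Cartan decomposition with $\dim \frakp' = 14-6 = 8$, the right-hand side equals $\binom{n+7}{7}$. My plan is to compute $\dim V(n\omega)^{\tilde K}$ by a two-step branching: first from (the $2$-fold cover of) $\tilde K_H$ down to the Levi $M$ supplied by Proposition~\ref{PBranching}, and then from $M$ down to $\tilde K$.

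The first step is exactly Proposition~\ref{PBranching} applied with $m=4$: in the three cases we obtain a multiplicity-free decomposition
\[
V(n\omega)|_M = \bigoplus_\lambda V(\lambda), \qquad M \in \{\GL_4,\; \GL_4 \times \GL_4,\; \wtGL_8\},
\]
with $\lambda$ ranging over the explicit parameter sets described there. For the second step I would use the embedding of $\tilde K$ into $M$ dictated by the dual pair geometry of Section~\ref{G}: for $\bfE_6$, the double cover of $\SO_3$ sits in $\GL_4$ via the cubic representation $\Sym^3(\bbC^2)$, with $\rO_3$ acting through its quotient; for $\bfE_7$, $\rU_3$ embeds in $\GL_4 \times \GL_4$ via the embedding induced by the dual pair $\Sp_6(\bbR) \times \bfG_2 \subset \bfE_7$; for $\bfE_8$, $\SU_2 \times_2 \Sp_6$ embeds in $\wtGL_8$ via the $8$-dimensional representation coming from the inclusion $\bfF_4 \subset \bfE_8$. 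Given these embeddings, $\dim V(\lambda)^{\tilde K}$ is computable from classical invariant theory: Clebsch--Gordan counting for $\SU_2$ via the cubic embedding, the first fundamental theorem for $\GL_3$ in the $\rU_3$-case, and the classical $\Sp_6$-invariant theory for polynomial $\GL_8$-modules in the last case.

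Finally, I would sum $\dim V(\lambda)^{\tilde K}$ over the parameter set and verify that the total equals $\binom{n+7}{7}$. This combinatorial identity is where I expect the main work to lie. It should yield either to a generating-function manipulation, or more satisfyingly to an explicit bijection between the $\tilde K$-fixed basis and monomials of degree $n$ in $8$ variables -- the latter being the natural basis of $S^n(\frakp')$. Such a bijection would also foreshadow the stronger identity $S(\frakp') \cdot v_0 = \Vmin^{\tilde K}$ that the paper needs downstream for strong duality, and it would also suggest why the analogous computation for $V(n\omega)^{K'}$ (needed in the forthcoming Proposition~\ref{PKprimeinvariants}) is harder: the natural grading on $S^n(\frakp')$ is tuned to $\tilde K$ rather than $K'$.
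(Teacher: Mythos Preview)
Your overall strategy---first restrict $V(n\omega)$ to the Levi $M$ via Proposition~\ref{PBranching}, then branch further to $\tilde K$, then sum---is exactly what the paper does, and your instinct that the answer should come from a bijection with monomials in $8$ variables is also right. But the specifics of the second step are where your proposal goes wrong.

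The $\bfE_6$ embedding is incorrect. You claim the double cover of $\SO_3$ sits in $\GL_4$ via $\Sym^3(\bbC^2)$, but $-1 \in \SU_2$ acts on $\Sym^3(\bbC^2)$ by $-1$, so this representation does not even descend to $\SO_3$, let alone to $\rO_3$ as you assert. The actual embedding is the far more pedestrian $\rO_3 \subset \GL_3 \subset \GL_4 \subset \Sp_8$, with the extra $\bbZ/2\bbZ$ in $\tilde K_H$ recording $\det$. With the wrong embedding your Clebsch--Gordan count would simply give the wrong answer.

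More importantly, what you are missing in all three cases is a uniform intermediate subgroup $L$ with $\tilde K \subset L \subset M$: namely $L = \GL_3$, $\GL_3 \times \GL_3$, and $\wtGL_2 \times_2 \wtGL_6$ respectively. The pair $(L,\tilde K)$ is a Gelfand pair in each case, so Cartan--Helgason picks out exactly which $L$-types carry a $\tilde K$-invariant (and it is at most one-dimensional). The branching $M \to L$ is then governed by Gelfand--Zetlin interlacing. Stacking the interlacing conditions from Proposition~\ref{PBranching} and from Gelfand--Zetlin, together with the parity or self-duality constraints coming from Cartan--Helgason, produces in every case a chain $n \geq a_1 \geq \cdots \geq a_7 \geq 0$ of integers---the paper calls these $n$-strings---and the bijection with degree-$n$ monomials in $8$ variables is then immediate. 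Your proposed tools (Clebsch--Gordan for a cubic embedding, the first fundamental theorem, $\Sp_6$-invariant theory on polynomial $\GL_8$-modules) are not adapted to this structure and would not yield the clean $7$-step interlacing pattern that makes the count work.
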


\begin{proof}
 We define an $n$-string to be a sequence of integers 
\[ 
n \geq a_1 \geq a_2\geq a_3 \geq a_4 \geq a_5 \geq a_6 \geq a_7 \geq 0. 
\] 
The number of $n$-strings is equal to the dimension of the space of
homogeneous polynomials of degree $n$ in $8$ variables. Indeed, to
every $n$ string we can attach a monomial
\[ 
x_1^{n-a_1} x_2^{a_1-a_2} \cdot \ldots \cdot x_8^{a_7}
\] 
and vice versa. Since the dimension of $\frakp'$ is 8, the strategy of
the proof is to show that $\dim V(n \omega)^K$ is equal to the number
of $n$-strings.  We consider the three cases separately.

\medskip 
\noindent 
\underline{Case $\bfE_6$}: $\tilde K_H\cong \Sp_8 \times \langle \epsilon
\rangle$ and $\tilde{K} \cong  \rO_3$.  The embedding of $\tilde{K}$ into
$\tilde K_H$ is given by $x\mapsto (i(x), \det(x))$ where $i$ is the
composite of the embeddings
\[ 
 \rO_3 \subset \GL_3 \subset  \GL_4 \subset \Sp_8.
\] 
Since $\epsilon$ acts on $V(n \omega)$ by $(-1)^n$, our our task is to
determine the dimension of the subspace of $V(n\omega)$ on which
$i(\rO_3)$ acts by the character $\det^n$.  Let $W = V(a,b,c)$ be a
representation of $\GL_3$ with the highest weight $(a,b,c)$.  By the
Cartan-Helgason Theorem, $\dim W^{\SO_3} \leq 1$.  The representation
$W$ contains a nonzero $\SO_3$ invariant vector $v$ if and only if $a
\equiv b \equiv c \pmod{2}$. Moreover $\rO_3$ acts on $v$ by
$\det^{a+b+c}$.  We have thus shown that $W$ contains $\det_{\rO_3}^n$
with multiplicity at most one, and it is one if and only if $a \equiv
b \equiv c \equiv n \pmod{2}$.  Let $V = V(\lambda_1, \ldots,
\lambda_4)$ be the irreducible representation of $\GL_4$ of the
highest weight $\lambda_1 \geq \lambda_2 \geq \lambda_3 \geq
\lambda_4$.  By \Cref{PBranching}, $V(n \omega)$ is a multiplicity
free direct sum of $V$ such that $n \geq \lambda_i \geq -n$ and 
  $\lambda_i \equiv n \pmod{2}$.  By the Gelfand-Zetlin pattern,
 a summand $V$ contains $W$ if and only if
\[
n \geq \lambda_1 \geq a \geq \lambda_2 \geq b \geq \lambda_3 \geq c
\geq \lambda_4 \geq -n
\]
and when this happens, it contains $W$ with multiplicity one. Thus
each such string of integers, where all are odd or all are even,
contributes one dimension to $V(n \omega)^{\tilde K}$. Since these
strings are clearly in a bijection with the $n$-strings, we have
completed the proof in this case.

\medskip 
\noindent 
\underline{Case $\bfE_7$}: $K_H \cong \SL_8$ and $K \cong \GL_3$. The
embedding of $K$ into $K_H$ is given by the composite of the
embeddings
\[ 
   \triangle \GL_3\subset \GL_3^2 \subset \SL_4^2 \subset \SL_8.
\] 
An irreducible representation $W$ of $\GL_3^2$ contains a nonzero
$K$-invariant if and only if $W$ is of the form $V(a,b,c) \otimes
V(a,b,c)^*$.  Let $V$ be the restriction to $\SL_4^2$ of the
irreducible representation $V(\lambda_1, \ldots, \lambda_4) \otimes
V(\lambda_1, \ldots, \lambda_4)^*$ of $\GL_4^2$.  By
\Cref{PBranching}, $V(n\omega)$ is a multiplicity free direct
sum of such $V$ with $n\geq \lambda_i \geq 0$.  By the Gelfand-Zetlin
pattern, $V$ contains $W$ if and only if
\[
n \geq \lambda_1 \geq a \geq \lambda_2 \geq b \geq \lambda_3 \geq c
\geq \lambda_4 \geq 0.
\]
In other words, we have an $n$-string of integers.
Each such $n$-string contributes one dimension to
$V(n\omega)^K$. This completes the proof in this case.

\medskip 
\noindent 
\underline{Case $\bfE_8$}: $K_H \cong \Spin_{16}$ and $K \cong \SL_2
\times\Sp_6$. The embedding of $K$ into $K_H$ is given by the
composite of the embeddings
\[ 
 \SL_2 \times\Sp_6 \subset \wtGL_2 \times_2 \wtGL_6 \subset
\wtGL_8 \subset \Spin_{16}
\] 
 where the tildes above the groups denote double covers.  By the
 Cartan-Helgason theorem, an irreducible representation $W$ of
 $\wtGL_2 \times_2 \wtGL_6$ contains a nonzero $K$-invariant if and
 only if $W$ is of the form $V(d,d) \otimes V(a,a,b,b,c,c)$ where
 $a,b,c,d \in \frac{1}{2} \bbZ$, and $a,b,c$ are congruent to
   one another modulo $\bbZ$.  Let $V = V(\lambda_1, \lambda_1,
   \ldots, \lambda_4, \lambda_4)$ be the irreducible representation of
   $\wtGL_8$ where $\lambda_i$ are half-integers, congruent modulo
   $\mathbb Z$. By \Cref{PBranching}, $V(n \omega)$ is a multiplicity
   free direct sum of such $V$ with $n/2\geq \lambda_i \geq -n/2$ and
   $\lambda_i$ congruent to $n/2$ modulo $\mathbb Z$.  By the
   Gelfand-Zetlin pattern, a summand $V$ contains $W$ if and only if
   $d = \sum_{i=1}^4 \lambda_i - a - b -c$ and
\[
n/2 \geq \lambda_1 \geq a \geq \lambda_2 \geq b \geq \lambda_3 \geq c
\geq \lambda_4 \geq -n/2
\]
where all half-integers are congruent modulo ${\bbZ}$. When this
happens, $V$ contains $W$ with multiplicity one. Each such string of
half integers contributes one dimension to $V(n \omega)^K$. Since
these strings are in a bijection with the $n$-strings, we have
completed the proof in this case, as well.
\end{proof}

\section{A quaternionic see-saw}

The goal of this section is to compute the dimensions of $V(n
\omega)^{K'}$ where $K'$ is the maximal compact subgroup of $\rG_2$,
and $V(n\omega)$ are the types of the minimal representation  in
  \eqref{eqminKtypes}.  We shall compute these dimensions in the
case $H=\bfE_6$ and $H=\bfE_7$.  We do not know how to do this in the
case $H=\bfE_8$.

\subsection{}
Recall that the maximal compact of $\rG_{2,2}$ is $K'\cong \SU_{2,l}
\times \SU_{2,s}$.  Here the subscripts~$l$ and $s$ denote a long root
and a short root respectively. The embedding of $K'$ into $K_H$, for
all three $H$, is given by the sequence of embeddings
\[ 
 \SU_{2,l} \times \SU_{2,s} \subset \SU_2^4 \subset \Sp_8 \subset
\SU_8 \subset \Spin_{16}
\] 
where $\SU_{2,s}\subset \SU_2^3$ diagonally, and $\SU_2^4 \subset
\Sp_8$ is given by 4 orthogonal long roots.  The first and easy step
is to compute $\SU_{2,l}$-invariants in $V(n\omega)$.
\begin{inparaenum}[(i)]
\item
If $K_H\cong \Sp_8$,
then $V(n\omega)^{\SU_{2,l}}$ is naturally an $\Sp_6$-module. By the
branching rule  in \cite{WY}, it follows that
\[ 
V(n\omega)^{\SU_{2,l}}\cong V(n,n,0),  
\] 
the irreducible representation of $\Sp_6$ with the highest weight
$(n,n,0)$. 
\item If $K_H\cong \SU_8$, then $V(n\omega)^{\SU_{2,l}}$ is naturally
an $\SU_6$-module.  By the Littlewood-Richardson rule, it follows that
\[ 
V(n\omega)^{\SU_{2,l}}\cong \oplus_{a+b=n} V_{a,b}.
\] 
Here $V_{a,b}$ is the irreducible representation of $\SU_6$ with the
highest weight $a\omega_2 + b\omega_4$ where~$\omega_2$ and~$\omega_4$
are the second and the fourth fundamental weights of $\SU_6$
respectively.
\end{inparaenum}

\smallskip

The key result is the following lemma:

\begin{lemma} \label{PSU2invariants}
We have
\[
\dim V_{a,b}^{\SU_{2,s}} = \begin{pmatrix} a + 5
  \\ 5 \end{pmatrix} \begin{pmatrix} b + 5 \\ 5 \end{pmatrix}
-  \begin{pmatrix} a + 3 \\ 5 \end{pmatrix} \begin{pmatrix} b + 3
  \\ 5 \end{pmatrix}.
\]
\end{lemma}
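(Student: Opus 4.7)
The plan is to reduce this $\SU_{2,s}$-invariant computation to a branching rule for a quaternionic representation handled by \cite{LokeJFA} via a see-saw dual pair.

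First, I would unpack the geometry of the embedding. The diagonal $\SU_{2,s} \hookrightarrow \SU_2^3 \hookrightarrow \SU_6$ coincides, after identifying $\bbC^6 \cong \bbC^2 \otimes \bbC^3$, with $\SU(\bbC^2)$ acting on the first tensor factor; its centralizer in $\SU_6$ is $\SU(\bbC^3) \cong \SU_3$. Consequently $V_{a,b}^{\SU_{2,s}}$ carries a natural $\SU_3$-module structure, and the task reduces to summing the dimensions of its $\SU_3$-isotypic constituents. Note also that under this decomposition, $a\omega_2 + b\omega_4$ corresponds to the partition $(a+b, a+b, b, b, 0, 0)$, so $V_{a,b}$ is the image of the appropriate Schur functor applied to $\bbC^2 \otimes \bbC^3$.

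Second, I would set up the see-saw. The natural candidate is the see-saw obtained by embedding the complex dual pair $(\SU_2, \SU_3) \subset \SU_6$ into an ambient symplectic group, paired with a companion dual pair in which $\SU_3$ (or its cover) sits as the maximal compact of a quaternionic real form of a classical group. By see-saw reciprocity, the multiplicity of an $\SU_3$-type $\tau$ in $V_{a,b}^{\SU_{2,s}}$ equals the multiplicity of $V_{a,b}$, viewed as an $\SU_6$-type, in the restriction of the quaternionic $K$-type parametrized by $\tau$. The branching rule for such quaternionic representations is precisely what is computed in \cite{LokeJFA}, and it yields an explicit, closed-form expression for the $\SU_3$-decomposition of $V_{a,b}^{\SU_{2,s}}$.

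Third, having this decomposition, I would sum the $\SU_3$-dimensions via the Weyl dimension formula and verify the total equals the claimed binomial expression. The shape of the answer, a difference of two products of binomial coefficients, is naturally explained by the quaternionic branching: the main term $\binom{a+5}{5}\binom{b+5}{5}$ corresponds to the ``generic'' piece of the restriction, while the correction $\binom{a+3}{5}\binom{b+3}{5}$ accounts for a single linear relation among constituents, analogous to a length-one BGG-type resolution.

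The main obstacle will be step two: identifying precisely which quaternionic see-saw applies and matching the highest weight $a\omega_2 + b\omega_4$ to the correct parameter in the branching formula of \cite{LokeJFA}. Once this matching is fixed, step three becomes a routine, if slightly tedious, polynomial identity in $a$ and $b$ verifiable by induction (e.g.\ by fixing $b$ and inducting on $a$, using that both sides satisfy the same first-order difference equation and agree at small values).
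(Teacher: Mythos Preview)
Your high-level strategy --- a quaternionic see-saw combined with the branching rules of \cite{LokeJFA} --- is exactly the paper's, but the proposal has a genuine gap at the point you yourself flag: identifying the see-saw. Your guess that it lives in ``an ambient symplectic group'' with $\SU_3$ as the maximal compact of a quaternionic \emph{classical} group is off the mark. The see-saw the paper actually uses is exceptional: inside the quaternionic $\rE_{7,4}$ one has the two dual pairs
\[
\rF_{4,4}\times \SU_{2,s} \quad\text{and}\quad \SU_{2,1}\times \SU_6,
\]
and the representation being restricted is the minimal representation $\sigma_Z$ of $\rE_{7,4}$. One input is $\sigma_Z^{\SU_{2,s}}\cong \sigma_X:=\bGamma^1(\bbC[6])$ as $\rF_{4,4}$-modules; the other is the decomposition of $\sigma_Z$ under $\SU_{2,1}\times\SU_6$ from \cite{LokeJFA}, which exhibits $V_{a,b}$ as the $\SU_6$-partner of the discrete series $\calR^1(\chi^{a-b}[6+a+b])$ of $\SU_{2,1}$. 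See-saw reciprocity then gives $\dim V_{a,b}^{\SU_{2,s}}=\dim\Hom_{\SU_{2,1}}\bigl(\sigma_X,\calR^1(\chi^{a-b}[6+a+b])\bigr)$.

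Your ``step three'' is also more laborious than necessary. In the paper there is no summation over $\SU_3$-types and no polynomial identity to verify by induction. The restriction formula of \cite{LokeJFA} applied to $\calR^1(\bbC[r])$ of $\rF_{4,4}$, restricted to $\SU_{2,1}\times\SU_3$, already packages the $\SU_3$-content as $\Sym^a V_6\otimes \Sym^b V_6^\vee$ with $V_6=S^2\bbC^3$; hence the main term $\binom{a+5}{5}\binom{b+5}{5}$ is literally $\dim(\Sym^a V_6)\cdot\dim(\Sym^b V_6^\vee)$. The correction term is not a generic BGG-type relation but comes from the specific short exact sequence $0\to\sigma_X\to\calR^1(\bbC[6])\to\calR^1(\bbC[10])\to 0$ of $\rF_{4,4}$-modules from \cite{GW}; the shift $r=6\mapsto r=10$ in the restriction formula produces the shift $(a,b)\mapsto(a-2,b-2)$ in the binomials.
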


In the above lemma if $0 \leq c \leq 1$, then we set $\begin{pmatrix}
  c + 3 \\ 5 \end{pmatrix} = 0$.  The proof of the lemma will be
  given in \Cref{sec:proofsu2}. It uses quaternionic representations
and a see-saw dual pair in the quaternionic group of type $\bfE_7$.
Before that, we shall derive our branching rules.

\begin{prop} \label{PKprimeinvariants} 
Let $K'$ be the maximal compact subgroup of $\rG_2$, and let
$\mathfrak k \oplus \frakp$ be the Cartan decomposition of the
centralizer $\frakg$ of $\rG_2$ in $\mathfrak h$. 
\begin{enumerate}[(i)] 
\item If $\mathfrak h$ is of the type $\bfE_6$ and $\frakg$ of the type
  $\rA_2$ then, for all $n$,
\[ 
\dim V(n\omega)^{K'} = \begin{pmatrix} n + 4
  \\ 4 \end{pmatrix} = \dim S^n(\frakp).
 \] 
\item If $\mathfrak h$ is of the type $\bfE_7$ and $\frakg$ of the type
  $\rC_3$ then, for all $n$,
\[ 
\dim V(n \omega)^{K'} = \begin{pmatrix} n+11 \\ 11 \end{pmatrix}
  - \begin{pmatrix} n+7 \\ 11 \end{pmatrix} = \dim
  S^n(\frakp)-\dim S^{n-4}(\frakp). 
  \] 
\end{enumerate}   
\end{prop}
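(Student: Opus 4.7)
\smallskip

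The plan is to compute $\dim V(n\omega)^{K'}$ in two stages, exploiting the decomposition $K'\cong\mathrm{SU}_{2,l}\times\mathrm{SU}_{2,s}$. The first stage---taking invariants under the long-root factor $\mathrm{SU}_{2,l}$---has already been carried out in the discussion preceding Lemma \ref{PSU2invariants}: for $H$ of type $\bfE_6$ one obtains $V(n\omega)^{\mathrm{SU}_{2,l}}\cong V(n,n,0)$ as an $\mathrm{Sp}_6$-module, while for $H$ of type $\bfE_7$ one obtains $V(n\omega)^{\mathrm{SU}_{2,l}}\cong\bigoplus_{a+b=n}V_{a,b}$ as an $\mathrm{SU}_6$-module. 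What remains is to extract the $\mathrm{SU}_{2,s}$-invariants---recall that $\mathrm{SU}_{2,s}$ sits diagonally inside $\mathrm{SU}_2^3\subset\mathrm{Sp}_6$---and, in case (ii), to sum over $a+b=n$.

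For case (ii), applying Lemma \ref{PSU2invariants} to each summand $V_{a,b}$ yields
\[
\dim V(n\omega)^{K'}=\sum_{a+b=n}\left[\binom{a+5}{5}\binom{b+5}{5}-\binom{a+3}{5}\binom{b+3}{5}\right].
\]
Each piece is a Vandermonde-Chu convolution: from $(1-x)^{-6}\cdot(1-x)^{-6}=(1-x)^{-12}$ one reads off the identity $\sum_{a+b=n}\binom{a+5}{5}\binom{b+5}{5}=\binom{n+11}{11}$, and the same identity after a degree-$2$ shift in each factor gives $\sum_{a+b=n}\binom{a+3}{5}\binom{b+3}{5}=\binom{n+7}{11}$. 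This produces the asserted dimension formula. The rightmost equality is the arithmetic identity $\binom{n+11}{11}=\dim S^n(\frakp)$ and $\binom{n+7}{11}=\dim S^{n-4}(\frakp)$, valid because $\frakg$ of type $\rC_3$ corresponds to $G=\mathrm{Sp}_6(\bbR)$ with $\dim\frakp=12$.

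For case (i), the same two-stage procedure reduces matters to computing $\dim V(n,n,0)^{\mathrm{SU}_{2,s}}$. This dimension can be extracted from (a close analog of) Lemma \ref{PSU2invariants}, either by decomposing $V_{n,0}|_{\mathrm{Sp}_6}$ into $\mathrm{Sp}_6$-irreducibles (for example, $\wedge^{2}\bbC^{6}|_{\mathrm{Sp}_6}=V(1,1,0)\oplus\bbC$ for $n=1$) and inverting the resulting relations against the formula in the lemma, or---more intrinsically---by running the same quaternionic see-saw with the symplectic group playing the role of the unitary group. The resulting count equals $\binom{n+4}{4}$, and the rightmost equality is once again arithmetic: $\frakg$ of type $\rA_2$ corresponds to $G=\mathrm{SL}_3(\bbR)$ with $\dim\frakp=5$, so $\dim S^n(\frakp)=\binom{n+4}{4}$.

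The main obstacle is Lemma \ref{PSU2invariants} itself, whose proof via a quaternionic see-saw pairing inside the quaternionic form of $\bfE_7$ is the technical heart of the section and is deferred to \Cref{sec:proofsu2}. Once that lemma---together with the parallel computation needed in case (i)---is in hand, the proposition follows by the combinatorial manipulations outlined above.
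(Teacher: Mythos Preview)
Your treatment of part (ii) is correct and matches the paper exactly: sum Lemma \ref{PSU2invariants} over $a+b=n$ and use the Vandermonde identity.

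Part (i), however, is left as a sketch, and your two proposed routes are both more complicated than what is actually needed. You do not need ``a close analog'' of Lemma \ref{PSU2invariants}, nor a separate symplectic see-saw: the lemma itself, applied with $b=0$, already gives $\dim V_{n,0}^{\SU_{2,s}}=\binom{n+5}{5}$ (the subtracted term vanishes since $\binom{3}{5}=0$). The missing ingredient is the explicit branching
\[
\Res^{\SU_6}_{\Sp_6} V_{n,0}\;\cong\;\bigoplus_{c=0}^{n} V(c,c,0),
\]
which is \cite[(25.39)]{FH}. This makes $V(n,n,0)\cong V_{n,0}/V_{n-1,0}$ as $\SU_{2,s}$-modules, and hence
\[
\dim V(n,n,0)^{\SU_{2,s}}=\dim V_{n,0}^{\SU_{2,s}}-\dim V_{n-1,0}^{\SU_{2,s}}=\binom{n+5}{5}-\binom{n+4}{5}=\binom{n+4}{4}.
\]
Your phrase ``inverting the resulting relations'' suggests you had this in mind but did not recognize that the branching is a clean filtration by $c$, so the inversion is a one-step telescoping rather than a genuine M\"obius inversion. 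Once this is written down, part (i) is shorter than part (ii).
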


\begin{proof}  We have $V(n\omega)^{\SU_{2,l}}= V(n,n,0)$. 
By \cite[(25.39) on p. 427]{FH}),  the restriction of $V_{n,0}$ to $\Sp_6$ decomposes as 
\[ 
V_{n,0}\cong\oplus_{c\leq n}
V(c,c,0).
\]  
 Hence $V(n,n,0) \cong V_{n,0} / V_{n-1,0}$, as $\SU_{2,s}$-modules, and by \Cref{PSU2invariants}, 
\[
\dim V(n\omega)^{K'} = \dim V_{n,0}^{\SU_{2,s}} - \dim
V_{n-1,0}^{\SU_{2,s}} = \begin{pmatrix} n + 5 \\ 5 \end{pmatrix}
- \begin{pmatrix} n + 4 \\ 5 \end{pmatrix} = \begin{pmatrix} n + 4
  \\ 4 \end{pmatrix}.
\]
Next we prove (ii).
We have $V(n\omega)^{\SU_{2,l}}\cong \oplus_{a+b=n} V_{a,b}$. 
Hence by \Cref{PSU2invariants},
\begin{align*}
\dim V(n \omega)^{K'} = & \sum_{a+b=n} V_{a,b}^{\SU_{2,s}} = 
\sum_{a+b=n}  \begin{pmatrix} a + 5
  \\ 5 \end{pmatrix} \begin{pmatrix} b + 5 \\ 5 \end{pmatrix}
- \sum_{a+b=n}  \begin{pmatrix} a + 3 \\ 5 \end{pmatrix} \begin{pmatrix} b + 3
  \\ 5 \end{pmatrix} \\
  = & \begin{pmatrix} n+11 \\ 11 \end{pmatrix}
  - \begin{pmatrix} n+7 \\ 11 \end{pmatrix}. 
\end{align*}
\end{proof}

\subsection{Quaternionic representations}
We recall some results from \cite{GW} and \cite{LokeJFA} on
quaternionic representations.  Let $G$ be a quaternionic simple,
simply connected real Lie group as in \cite{GW}, i.e. it has a maximal
compact subgroup $K$ isomorphic to $\SU_2 \times_2 M$.  Let $T \subset
\SU_2$ be a maximal Cartan subgroup. Let $\frakg$, $\frakt$ etc be the
complexified Lie algebras of $G$, $T$ etc, respectively.  The adjoint
action of $T$ gives a $\mathbb Z$-grading of $\frakg$. The grading
defines a theta-stable maximal parabolic subalgebra $\frakq =(\frakt
\oplus \frakm) \oplus \frakn$, where $\frakn$ is a two step nilpotent
Lie subalgebra with one-dimensional center $Z(\frakn)$. We identify
$T$ with $\rU_1$ (the group of complex numbers of norm one) so that
$z\in \rU_1$ acts by multiplication by $z$ on $\frakn/Z(\frakn)$ and
by multiplication by $z^2$ on $Z(\frakn)$.  Let $L = T \times_2
M$. Let $W$ be an irreducible finite dimensional representation of
$L$. The group $T\cong \rU_1$ acts on $W$ by multiplication by $z^k$
for some integer $k$. Hence we shall also use the notation $W=W[k]$ to
emphasize the action of $T$. In particular, $\bbC[k]$ will denote the
representation of $L$ trivial on $M$.  Extend $W$ trivially
over~$\frakn$ so that it is a representation of $\frakq$.  For $k \geq
2$, the (unnormalized) cohomologically induced representation
$\calR^1(W) =\Gamma^1_{K/L}(\Hom_{\frakq,L}(U(\frakg),W[k]))$ has
$\SU_2 \times_2 M$-types of the form
\[
\calR^1(W) = \bigoplus_{n=0}^\infty S^{k+n-2}(\bbC^2) \otimes
(\Sym^n(\frakn_M) \otimes W_M)
\]
where $\bbC^2$ is the standard, 2-dimensional representation of $\SU_2$,
$W_M$ is the restriction of $W$ to $M$ and $\frakn_M$ is the
restriction of $\frakn/Z(\frakn)$ to $M$.  Let $\bGamma^1(W)$ be the
unique irreducible subquotient $\calR^1(W)$ containing the lowest
$K$-type $S^{k-2}(\bbC^2) \otimes W_M$.

\smallskip 

Let $\rE_{7,4}$ be the simply connected quaternionic real form of type
$\bfE_7$, and let $\sigma_Z$ be its minimal representation constructed
in \cite{GW}.  Let $\rF_{4,4}$ be the split group of type~$\bfF_4$.
It is a quaternionic group, and its maximal compact subgroup is
isomorphic to $\SU_2 \times_2 \Sp_{6}$.  Let $\sigma_X$ be the
representation $\bGamma^1(\bbC[6])$ of $\rF_{4,4}$.  By
\cite[Prop. 8.5]{GW} we have an exact sequence of Harish-Chandra
  modules of $\rF_{4,4}$:
\begin{equation} \label{eqexactseq}
0 \rightarrow \sigma_X \rightarrow \calR^1(\bbC[6]) \rightarrow
\calR^1(\bbC[10]) \rightarrow 0.
\end{equation}

The group $\rE_{7,4}$ contains a see-saw of dual pairs \cite{LokeJFA} 
 
 \begin{picture}(100,82)(-130,10) 

\put(10,24){$\SU_{2,1}$} 

\put(75,24){$\SU_{2,s}$}

\put(37,36){\line(1,1){35}}

\put(10,74){$\rF_{4,4}$}
\put(37,71){\line(1,-1){35}}

\put(75,74){$\SU_6$}

\end{picture}

\noindent We shall restrict the minimal representation $\sigma_Z$ to
these two dual pairs. By \cite[Eqn. 6.6]{Lokethesis},
\begin{equation}  \label{eq:theta-one} 
\sigma_Z^{\SU_{2,s}} \cong \sigma_X 
\end{equation} 
as representations of $\rF_{4,4}$. The restriction of $\sigma_X$ to
$\SU_{2,1} \times \SU_6$ is given in \cite{LokeJFA}. In order to
describe the result, note that $\SU_{2,1}$ is also a quaternionic
group with the maximal compact subgroup isomorphic to $\SU_2 \times_2
\rU_1$, i.e. $M=\rU_1$ in this case. Irreducible representations of
$M=\rU_1$ are $\chi^n$, where $\chi(z)=z$ for all $z\in \rU_1$, and
$n\in\mathbb Z$.  If $a,b$ be are non-negative integers, then
$\calR^1(\chi^{a-b}[6+a+b])$ is a quaternionic discrete series
representation of $\SU_{2,1}$.  Recall that $V_{a,b}$ is the
irreducible representation of $\SU_6$ of the highest weight
$a\omega_2+b\omega_4$.

\begin{thm}  \cite[Theorem 6.1.2]{LokeJFA} 
 We have 
\[
\Res^{\rE_{7,4}}_{\SU_{2,1} \times \SU_6} \sigma_Z
=\bigoplus_{a,b=0}^\infty   \calR^1(\chi^{a-b}[6+a+b]) \otimes V_{a,b}. 
\]
\end{thm}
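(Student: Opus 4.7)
The strategy is to exploit the see-saw from the excerpt---$\rF_{4,4}\supset \SU_{2,1}$ paired with $\SU_6\supset \SU_{2,s}$ inside $\rE_{7,4}$---together with the identification $\sigma_Z^{\SU_{2,s}}\cong \sigma_X$ of \eqref{eq:theta-one}. This reduces the asserted decomposition to a $K$-type calculation for $\sigma_X$ restricted from $\rF_{4,4}$ to $\SU_{2,1}$, which can be carried out via the exact sequence \eqref{eqexactseq} and the explicit $K$-type formula for $\calR^1(W)$ recalled earlier.

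First, I would establish Howe duality for the pair $\SU_{2,1}\times\SU_6\subset\rE_{7,4}$, yielding a decomposition
\[
\sigma_Z \;\cong\; \bigoplus_{\sigma} B_\sigma\otimes\sigma,
\]
where $\sigma$ runs over irreducible $\SU_6$-modules and each $B_\sigma$ is an irreducible $(\frakg,K)$-module of $\SU_{2,1}$. Taking $\SU_{2,s}$-invariants of both sides and applying \eqref{eq:theta-one} yields
\[
\sigma_X|_{\SU_{2,1}} \;\cong\; \bigoplus_{\sigma} B_\sigma\otimes \sigma^{\SU_{2,s}}.
\]
A Cartan--Helgason argument for the diagonal embedding $\SU_{2,s}\subset\SU_2^3\subset\SU_6$ shows $\sigma^{\SU_{2,s}}\neq 0$ precisely when $\sigma=V_{a,b}$ with $a,b\geq 0$, so only those summands contribute and the dimensions of the weights on the right are those of \Cref{PSU2invariants}.

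The heart of the proof is then the identification $B_{V_{a,b}}=\calR^1(\chi^{a-b}[6+a+b])$, which I would pin down by matching $K$-types as $\SU_2\times_2\rU_1$-modules. On the left, I would compute the $K$-character of $\sigma_X|_{\SU_{2,1}}$ from \eqref{eqexactseq} as the difference of the $K$-characters of $\calR^1(\bbC[6])$ and $\calR^1(\bbC[10])$, expand each using the formula from the excerpt, and restrict the $\Sp_6$-factor $\Sym^n(\frakn_M)$ to the $\rU_1\subset\Sp_6$ that becomes the $M$-factor of $K_{\SU_{2,1}}$. On the right, I would expand the conjectured decomposition, with each $V_{a,b}$-summand weighted by $\dim V_{a,b}^{\SU_{2,s}}$ and with $\frakn_M\cong \bbC_\chi\oplus\bbC_{\chi^{-1}}$ for $\SU_{2,1}$. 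Matching the two resulting $K$-type series, together with the rigidity that a quaternionic $\calR^1(W)$ is determined up to isomorphism by its lowest $K$-type, identifies the $B_{V_{a,b}}$.

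The main obstacle is the combinatorial identity that emerges from this $K$-type match: one must show that the generating function for $\sum_n\Sym^n(\frakn_M)|_{\rU_1}$ on the $\rF_{4,4}$ side reorganizes as the weighted sum over $(a,b)$ of the analogous $\SU_{2,1}$-generating functions, with weights given by the closed-form expression of \Cref{PSU2invariants}. Two subsidiary issues merit care: Howe multiplicity-one for the exceptional pair $\SU_{2,1}\times\SU_6$, which may instead have to be deduced a posteriori from the $K$-type match rather than assumed, and the precise identification of $\frakn_M$ as an $\Sp_6$-module together with its restriction to the relevant $\rU_1\subset\Sp_6$.
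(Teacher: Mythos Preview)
The paper does not prove this theorem: it is quoted from \cite[Theorem~6.1.2]{LokeJFA} and used as a black box in the proof of \Cref{PSU2invariants} in \Cref{sec:proofsu2}. So there is no proof here to compare against.

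That said, your proposed argument has two genuine problems. First, it is circular within the logic of this paper. You invoke the closed-form expression of \Cref{PSU2invariants} to weight the $V_{a,b}$-summands in your $K$-type match, but \Cref{PSU2invariants} is \emph{derived from} the theorem you are trying to prove (together with the $\rF_{4,4}\to\SU_{2,1}\times\SU_3$ restriction formula, also taken from \cite{LokeJFA}). You cannot feed the output of the theorem back in as an ingredient of its own proof; you would need an independent computation of $\dim V_{a,b}^{\SU_{2,s}}$, and none is offered.

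Second, the ``Cartan--Helgason argument'' is incorrect as stated. The diagonal $\SU_{2,s}\subset\SU_2^3\subset\SU_6$ does not make $(\SU_6,\SU_{2,s})$ a symmetric pair, and it is simply false that $\sigma^{\SU_{2,s}}\neq 0$ forces $\sigma\cong V_{a,b}$. For example, the adjoint representation of $\SU_6$ (highest weight $\omega_1+\omega_5$, not of the form $a\omega_2+b\omega_4$) restricts to $\SU_{2,s}$ as nine copies of $\bbC^2\otimes\bbC^2$ minus a trivial summand, hence has an $8$-dimensional space of $\SU_{2,s}$-invariants. What \emph{is} true---and is part of the content of the theorem itself---is that only the $V_{a,b}$ occur as $\SU_6$-isotypes of $\sigma_Z$; you cannot assume this at the outset, and taking $\SU_{2,s}$-invariants does not isolate them among all $\SU_6$-modules. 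Any valid argument must establish directly which $\SU_6$-types occur in $\sigma_Z$, for instance by analyzing the $K_H$-types of $\sigma_Z$ under the maximal compact of $\rE_{7,4}$ and restricting, as is done in \cite{LokeJFA}.
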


\subsection{Proof of \Cref{PSU2invariants}} \label{sec:proofsu2} 
It follows from this theorem and the see-saw pair that  
\[
 \dim V_{a,b}^{\SU_{2,s}} = \dim
 \Hom_{\SU_{2,1}}\left(\sigma_Z^{\SU_{2,s}},   \calR^1(\chi^{a-b}[6+a+b]) \right), 
\]
where $\sigma_Z^{\SU_{2,s}}\cong \sigma_X$ by  \eqref{eq:theta-one}. 

 Let $V_6 =S^2 \bbC^3$ be the 6 dimensional representation of $\SU_3$. 
 By the restriction formula in \cite{LokeJFA}, for $r \geq 6$, 
\[
\Res^{\rF_{4,4}}_{\SU_{2,1} \times \SU_3} \calR^1(\bbC[r]) = 
\bigoplus_{a,b = 0}^\infty \calR^1(\chi^{a-b}[r+a+b]) \otimes \left(\Sym^a V_6
\otimes \Sym^b(V_6^\vee) \right).
\]
 In particular, the above restriction is a direct sum of
  discrete series representation of $\SU_{2,1}$.
Combining the above for $r=6,10$ and \eqref{eqexactseq} we obtain  
\begin{align}
 \dim V_{a,b}^{\SU_{2,s}} & = \dim \left( \Sym^a V_6 \otimes
 \Sym^b(V_6^\vee) \right) - \dim \left( \Sym^{a-2} V_6 \otimes
 \Sym^{b-2}( V_6^\vee) \right) \label{eqh} \\ & = \begin{pmatrix} a +
   5 \\ 5 \end{pmatrix} \begin{pmatrix} b + 5 \\ 5 \end{pmatrix}
 - \begin{pmatrix} a + 3 \\ 5 \end{pmatrix} \begin{pmatrix} b + 3
   \\ 5 \end{pmatrix}. \nonumber
\end{align}
This proves \Cref{PSU2invariants}. \qed

\subsection*{Acknowledgement} 
This work was started during the second author's visit to the
Mathematical Sciences Institute of the National University of
Singapore in 2016, and was completed during the first author's visit
to the University of Utah in 2017. Both authors would like to thank
these institutions for hospitality and support. The first author was
supported in part by an MOE-NUS AcRF Tier 1 grant R-146-000-208-112.
The second author was supported in part by an NSF grant DMS-1359774.


\begin{thebibliography}{999}
\bibitem[A]{A} J.-P. Anker {\em The Spherical Fourier Transform of
  Rapidly Decreasing Functions. A Simple Proof of a Characterization
  due to Harish-Chandra, Helgason, Trombi, and Varadarajan}, Journal
  of Functional Analysis {\bf 96}, 331-349 (1991).

\bibitem[Bou]{Bou} N. Bourbaki, {\em Lie groups and Lie algebras},
  Chapters 4-6 (translated from the 1968 French original by Andrew
  Pressley), Elements of Mathematics, Springer-Verlag, (2002), ISBN
  3-540-42650-7.

\bibitem[BK]{BK} R. Brylinski and B. Kostant, {\em Minimal
  representation of $\rE_6$, $\rE_7$ and $\rE_8$ and the generalized
  Capelli Identity.} Proc. Natl. Acd. Sci. USA {\bf 91} (1994),
  2469-2472.

\bibitem[FH]{FH} W. Fulton and J. Harris, {\em Representation Theory:
  A First Course.} Graduate Texts in Mathematics {\bf 129},
  Springer-Verlag, (2004), ISBN 978-1-4612-0979-9.

\bibitem[GS]{GS} W. T. Gan and G. Savin, {\em Real and global
  endoscopic lifts from ${\mathrm{PGL}}(3)$ to $\rG_2$.} IMRN {\bf 50}
  (2003), 2699-€"2724.
  
\bibitem[GW]{GW} B. Gross and N. Wallach, {\em On quaternionic
  discrete series representations and their continuations}, J. reine
  angew. Math. {\bf 481} (1996), 73-123.

\bibitem[Ho]{Howe} R. Howe, {\em Transcending classical invariant theory.}
J. Amer. Math. Soc. 2 (1989), 535-552.

\bibitem[HPS]{HPS} J.-S. Huang, P. Pand\v{z}i\'{c} and G. Savin, {\em
    New dual pair correspondences.} Duke Math. J. {\bf 82}, No. 2
  (1996), 447-471.

\bibitem[Kn]{Knapp} A. W. Knapp , {\em Representation Theory of
  Semisimple Groups: An Overview Based on Examples.} Princeton
  University Press (2001).


\bibitem[Li]{Li} J.-S. Li, {\em The correspondences of infinitesimal
  characters for reductive dual pairs in simple Lie groups.} Duke
  Math. J. Volume 97, Number 2 (1999), 347-377.

\bibitem[Lo97]{Lokethesis} H. Y. Loke, PhD Thesis, Harvard University
  (1997).

\bibitem[Lo00]{LokeJFA} H. Y. Loke, {\em Restrictions of quaternionic
  representations.} Journal of Functional Analysis {\bf 172}, no. 2
  (2000), 377-403.

\bibitem[LS06]{LS} H. Y. Loke and G. Savin, {\em Rank and matrix
  coefficients for simply laced groups.} J. reine angew. Math. {\bf
  599} (2006), 201-216.

\bibitem[LS07]{LSIsrael} H. Y. Loke and G. Savin, {\em On local lifts
  from $G_2(R)$ to $Sp_6(R)$ and $F_4(R)$.} Israel Journal of
  Mathematics, 159, no. 1 (2007), 349-371.
  
  
\bibitem[LS15]{LS15} H. Y. Loke and G. Savin, {\em Rational forms of
  exceptional dual pairs.} J. of Algebra, {\bf 422} (2015) 683-696.


\bibitem[TV]{TV} P. Trombi and V. S. Varadarajan, {\em Spherical transforms
  of semisimple Lie groups.} Annals of Mathematics, Second Series,
  {\bf 94} (1971), 246-€"303. 
  

\bibitem[WI]{RRGI} N. Wallach, {\em Real Reductive Groups I.} Pure and
  Applied Mathematics {\bf 132}, Academic Press, San Diego, (1988),
  ISBN-13: 978-0127329604.

\bibitem[WII]{RRGII} N. Wallach, {\em Real Reductive Groups II.}
Pure and Applied Mathematics {\bf 132-II},
Academic Press, San Diego, (1992), ISBN-13: 978-0127329611.

\bibitem[WY]{WY} N. Wallach and O. Yacobi, {\em A multiplicity formula
  for tensor products of SL2-modules and an explicit $\rSp(2n)$ to
  $\rSp(2n-2) \times \rSp(2)$ branching formula.} Contemporary
  Mathematics {\bf 190}, (2005), 151-155.
\end{thebibliography}
\end{document}